\documentclass{article}

\usepackage{graphicx}
\usepackage{amsmath}
\usepackage{amssymb}
\usepackage{subcaption}
\usepackage{algorithm} 
\usepackage{algpseudocode} 
\usepackage{hyperref}
\usepackage{amsthm}
\usepackage{authblk}

\newtheorem{theorem}{Theorem}
\newtheorem{corollary}{Corollary}

\usepackage[a4paper, left=3cm, right=3cm, top=3cm]{geometry}


\begin{document}

\newcommand{\R}{\mathcal{R}}
\newcommand{\Lo}{\mathcal{L}}
\newcommand{\lopt}{\hat{\lambda}}
\newcommand{\kopt}{\hat{k}}
\newcommand{\ropt}{\hat{\rho}}
\newcommand{\nopt}{\hat{\nu}}

\newcommand{\sumk}{\sum_{k = 0}^{\kopt\left(\lopt\right)}}
\newcommand{\halb}{\frac{1}{2}}

\title{Optimal Restart Strategies for Parameter-dependent Optimization Algorithms}
\author{Lisa Schönenberger and Hans-Georg Beyer}
\affil{\small Vorarlberg University of Applied Sciences, Research Center Business Informatics, 6850, Dornbirn, Austria \\
\{lisa.schoenenberger, hans-georg.beyer\}@fhv.at}
\date{}


\maketitle

\begin{abstract}
This paper examines restart strategies for algorithms whose successful termination depends on an unknown parameter \(\lambda\). After each restart, \(\lambda\) is increased, until the algorithm terminates successfully. It is assumed that there is a unique, unknown, optimal value for \(\lambda\). For the algorithm to run successfully, this value must be reached or surpassed. The key question is whether there exists an optimal strategy for selecting \(\lambda\) after each restart taking into account that the computational costs (runtime) increases with \(\lambda\). In this work, potential restart strategies are classified into parameter-dependent strategy types. A loss function is introduced to quantify the wasted computational cost relative to the optimal strategy. A crucial requirement for any efficient restart strategy is that its loss, relative to the optimal \(\lambda\), remains bounded. To this end, upper and lower bounds of the loss are derived. Using these bounds it will be shown that not all strategy types are bounded. However, for a particular strategy type, where \(\lambda\) is increased multiplicatively by a constant factor \(\rho\), the relative loss function is bounded. Furthermore, it will be demonstrated that within this strategy type, there exists an optimal value for \(\rho\) that minimizes the maximum relative loss. In the asymptotic limit, this optimal choice of \(\rho\) does not depend on the unknown optimal \(\lambda\).
\smallskip
\\
\noindent \textbf{Keywords:} Restart Strategy, Universal Optimal Strategy, Loss Function, Relative Loss, Evolution Strategy
\end{abstract}

\section{Introduction}
\label{sec:int}

Optimization algorithms are often confronted with various challenges, such as becoming trapped in a local optimum or very long runtimes. Restart strategies have proven to be an effective way to overcome these obstacles. They can significantly enhance the performance and robustness of optimization algorithms.

In restart strategies, the current search process is regularly stopped and the optimization algorithm is restarted. Often a different starting point is chosen, or an algorithmic parameter is changed. The specific implementation of restart strategies can vary considerably, ranging from simple random restarts to more sophisticated techniques that adapt to the particular algorithm and problem characteristics. In general, it is not clear what the optimal implementation of a restart strategy is. Obviously, this question cannot be answered in a general way for each type of optimization algorithm. Therefore, assumptions and constraints must be made. 

The optimal implementation of a restart strategy has already been studied for Las Vegas algorithms, which are always successful, but whose running time is a random variable. In \cite{LSZ93} it was possible to derive an optimal restart strategy under the assumption that the distribution of the running time is known. In the case where the running time is unknown \cite{LSZ93} also presents an universal restart strategy, whose performance is worse than the optimal strategy by only a logarithmic factor. This topic was also explored in \cite{L21} within the context of a continuous setting. An optimal restart strategy was derived for a restricted class of continuous probability distributions. In \cite{FKQS18} the optimal strategy from \cite{LSZ93} was applied in a (1+1)-EA (Evolutionary Algorithm). The results demonstrated that this restart strategy can identify the optimum in polynomial time, whereas the conventional (1+1)-EA requires an exponential runtime. Furthermore, it was also demonstrated that restarting the (1+1)-EA outperforms the (\(\mu\)+1)-EA.

The following investigations are limited to optimization algorithms whose success depends on a certain parameter \(\lambda\). This parameter can be, for example, the population size in Evolutionary Algorithms (EA) or particle swarm optimization, or a predefined runtime. An additional assumption is that the optimization algorithm is successful\footnote{By "successful" it is meant that the algorithm has reached the domain of the global optimizer in contrast to the convergence to a local optimum.} if \(\lambda\) exceeds some unknown threshold and is unsuccessful otherwise. This means that there is an optimal choice \(\lopt\) for this parameter. The Las Vegas algorithms mentioned above generally do not satisfy this assumption. It is possible that the algorithm will not terminate after waiting for a certain time \(t\). However, if the algorithm is repeated, it may terminate after a time shorter than \(t\). 

As for the scenario considered in this work, it is provided that if the \(\lambda\) parameter is equal to or greater than the (unknown) \(\hat{\lambda}\) the EA terminates successfully, 
i.e., it approaches the global optimizer with success probability one. However, if \(\lambda < \hat{\lambda}\) the success probability drops quickly with decreasing \(\lambda\). 
This scenario is observed in Evolution Strategies (ES) \cite{BS02} optimizing certain highly multimodal objective functions such as the Rastrigin, Bohachevsky, and Ackley to name a few. The \(\lambda\) parameter in ES refers to the offspring population size. On the basis of several multimodal test functions, it has already been shown \cite{SB24} that the \(\lambda\) interval of global convergence uncertainty is rather small. Therefore, a tractable model for a theoretical analysis of these restart strategies assumes a fixed population size
\(\lambda=\hat{\lambda}\) that must be reached by the restart algorithm in an efficient manner.

As for all EA, the population size and the generations are natural numbers. Therefore \(\lambda \in \mathbb{N}\) is assumed. The condition that an optimal \(\lopt\) exists also implies that it makes no sense to restart the algorithm with the same or a smaller value for \(\lambda\). The parameter must therefore be increased for each restart.

A restart strategy defines a start value \(\lambda_0\) and determines how \(\lambda\) is changed after each restart. Thus, the restart strategy can be defined as \(\mathcal{R} = \left(\lambda_0, \lambda_1, \lambda_2, \ldots\right)\). \(\mathcal{R^*} = \left(\lambda_0, \lambda_0 \rho, \lambda_0 \rho^2, \lambda_0 \rho^3, \ldots \right)\)\footnote{If \(\lambda \in \mathbb{N}\) is a prerequisite, the values must be rounded accordingly.} is often used for such problems where \(\rho > 1\). In \cite{AH05} this was applied to ES where the population size was increased after each restart with the above scaling rule using an restart parameter of \(\rho = 2\). It was mentioned that experiments indicate that the optimal value for the restart parameter \(\rho\) is between 2 and 3. In \cite{J02} the above restart strategy was applied to the \((1 + 1)\)-EA where the maximum number of generations was increased after each restart. Also in this study, an restart parameter of 2 was chosen. Besides \(\mathcal{R^*}\), an additive restart strategy was also considered, where the restart parameter was added after each restart, i.e., \(\mathcal{R}^+ = \left(\lambda_0, \lambda_0 + \rho, \lambda_0 + 2\rho, \lambda_0 + 3\rho, \ldots \right)\). The goal of this investigation is to find the optimal choice of the restart parameter \(\rho\) for different restart strategy types.

This paper is organized as follows: The general definition of a restart strategy is given in Section~\ref{sec:RS} and the different types of restart strategies are introduced. Section \ref{sec:loss} introduces the loss function, which indicates how much computational resources are wasted compared to the optimal strategy. Upper and lower bounds for the loss function will be derived for each strategy type. A relative loss function is introduced in Section \ref{sec:rloss}. Being based on this relative loss function the optimal choice of the restart parameters that minimizes the maximal relative loss is examined. Finally, in Section~\ref{sec:end}, a summary of the results and an outlook on future research are given.

\section{Restart Strategies} \label{sec:RS}

The restart strategy (RS) under consideration is applicable to algorithms whose success depends on  an algorithmic parameter \(\lambda \in \mathbb{N}\). The algorithm \(\textbf{A}\) is only successful if this algorithmic parameter exceeds a certain bound. Formally, this can be expressed as
\begin{align} \label{eq:cond}
	&\textbf{A}(\lambda) \quad \textnormal{is successful if} \quad \lambda \geq \lopt \nonumber
	\\
	&\textbf{A}(\lambda) \quad \textnormal{is unsuccessful if} \quad \lambda < \lopt.
\end{align}
It is customary to measure the computational effort of black-box optimization algorithms, such as Evolution Strategies, by the number \(F_E(\lambda)\) of objective function evaluations that \(\textbf{A}(\lambda)\) uses until termination. As it is the case for the majority of algorithms, it can be assumed that \(F_E(\lambda)\) increases with \(\lambda\). Consequently, the optimal choice is to execute the algorithm with \(\lopt\). In this context, \(\lopt\) is also referred to as the \emph{optimal} \(\lambda\).

Evolution Strategies in multimodal landscapes meet this requirement to a satisfactory extent. This has been demonstrated in \cite{SB24} for several multimodal test functions. In this case, the parameter \(\lambda\) represents the population size. There is an interval for \(\lambda\) where  it is possible to achieve a positive success probability of less than 1. This interval, however, is relatively small in comparison to the population size required to achieve a positive success probability. If \(\lambda\) exceeds this interval, the success rate will remain constant at one. 

To approach the optimal choice of the algorithmic parameter \(\lopt\), which is generally unknown, a restart strategy can be used. Restart strategies are defined by an unbounded sequence
\begin{align} \label{eq:RS}
	\R = \left(\lambda_0, \lambda_1, \lambda_2, \ldots\right), \quad \lambda_i \in \mathbb{N},
\end{align}
where \(\lambda_k\) represents the algorithmic parameter of the \(k\)th run. The \(k\)th run of the RS is denoted with \(\R_k\). \(\R_k\) is stopped when a local stopping criterion is fulfilled. Then, an independent algorithm \(\R_{k + 1}\) with parameter \(\lambda_{k+1}\) is executed. This process is repeated until the algorithm is successful. This raises the question of how to choose \(\lambda_k\). Because of condition~(\ref{eq:cond}), it is clear that \(\lambda\) should be increased after each restart, but the optimal amount of increase is unknown. A common choice for \(\lambda_k\) is \(\lambda_k = \lambda_0 2^k\) (see for example \cite{AH05} or \cite{J02}), however, no criterion of optimality exists up until now.

In principle there are infinitely many restart strategies. Therefore, possible restart strategies are classified into parameter-dependent groups, called \emph{strategy types}. The corresponding parameter is called \emph{restart parameter}. Different strategy types will be considered. The first type increases the  population size by a constant amount, i.e.,
	\begin{align} \label{eq:Rp}
		\R^+ &= \left(\lambda_0, \lambda_1, \lambda_2, \ldots\right) \nonumber
		\\
		\lambda_k &= \lambda_{k - 1} + \nu = \lambda_0 + k \nu, \quad k \geq 1,
	\end{align}
where \(\nu \in \mathbb{N} \setminus \{0\}\) is the restart parameter. Another way to increase \(\lambda\) is to use a multiplicative change after each restart, i.e.
\begin{align} \label{eq:Rm}
	\R^* &= \left(\lambda_0, \lambda_1, \lambda_2, \ldots\right) \nonumber
	\\
	\lambda_k &= \lceil \lambda_{k - 1} \rho \rceil  = \lceil\lceil\lceil \lambda_0 \rho \rceil \rho\rceil \ldots \rho\rceil, \quad k \geq 1.
\end{align}
Because of the assumption that \(\lambda\) increases after each restart, it can be assumed that \(\rho > 1\). For \(\R^*\)-RS, \(\lambda_k\) is determined based on the previous rounded-up values. Alternatively, one can consider
\begin{align} \label{eq:Rmmod}
	\R^\times &= \left(\lambda_0, \lambda_1, \lambda_2, \ldots\right) \nonumber
	\\
	\lambda_k &= \lceil \lambda_0 \rho^{k}\rceil.
\end{align}
The \(\R^*\)-RS and \(\R^\times\)-RS are also called \emph{multiplicative strategy types}. A third type of restart strategies obeys a power law with constant \(\alpha\) defined by
\begin{align} \label{eq:Rpow}
	\R^\# &= \left(\lambda_0, \lambda_1, \lambda_2, \ldots\right) \nonumber
	\\
	\lambda_k &= \lceil \lambda_0 (k + 1)^\alpha\rceil.
\end{align}
It is assumed \(\lambda_k > \lambda_{k-1}\), which implies that \(\alpha \geq 1\).

There is no clear indication that one strategy type is more effective than the other. Furthermore, it is also not clear how to choose the restart parameters \(\nu\), \(\rho\), or \(\alpha\) for any given strategy type. The following sections investigates how the number of function evaluations is affected by the restart parameters. The objective of these investigations is to identify the optimal restart parameter for each strategy type. Furthermore, the criteria for a suitable strategy type will be defined.

\section{The Loss Function} \label{sec:loss}

When selecting the restart parameters \(\nu\), \(\rho\), or \(\alpha\), it is important to avoid choosing values that are too small as this will result in many restarts being necessary. Conversely, if the restart parameter is set to a very large value, \(\lambda\) will also become very large after just a few restarts. This can lead to \(\lambda\) being much larger than necessary, requiring more function evaluations than necessary. Let \(\lopt\) be the minimal population size needed such that an Evolutionary Algorithm EA is successful. The loss \(\Delta F_E\) of an \(\R\)-RS is defined by\footnote{The \(\rho\) is used as a substitute to indicate the dependency of \(\Delta F_E\) on the respective restart parameter.}
\begin{align} \label{eq:loss}
	\Delta F_E\left(\lopt, \rho\right) = \sumk F_E(\lambda_k) - F_E(\lopt).
\end{align}
\(F_E(\lambda_k)\) denotes the number of function evaluations that algorithm \(\textbf{A}(\lambda_k)\) uses until termination. It holds \(F_E(\lambda_k) = \lambda_{k}g_k\), where \(g_k\) is the number of generations used in the \(k\)th run. The \(\kopt\) denotes the minimum number of restarts required to attain a \(\lambda\) larger than or equal to \(\lopt\), i.e.,
\begin{align}
	\kopt\left(\lopt\right) = \arg \min \{k \mid \lambda_k \geq \lopt\}. 
\end{align}
Assuming that each run requires the same number of generations \(g\) until termination, (\ref{eq:loss}) becomes
\begin{align} \label{eq:lossSimpleg}
	\Delta F_E\left(\lopt, \rho\right) = \left(\sumk \lambda_k - \lopt\right)g.
\end{align}
The validity of this simplification does hold approximately for Evolution Strategies on certain highly multimodal objective functions as has been shown in \cite{OB24}. It is used here as a model assumption. As a result, \(g\) can be dropped in the following considerations, thus, a reduced loss function
\begin{align} \label{eq:lossSimple}
	\Lo\left(\lopt, \rho\right) = \sumk \lambda_k - \lopt
\end{align}
will be used. (\ref{eq:lossSimple}) can be calculated numerically using Alg. \ref{alg:lossNum}, where the update of \(\lambda\) is denoted by \(r\) and depends on the specific strategy type. It holds that
\begin{align}
	r(\lambda) &= \lambda + \nu \quad  \textnormal{for} \quad \R^+
	\\
	r(\lambda) &= \lceil \lambda  \rho \rceil \quad \textnormal{for} \quad \R^* 
	\\
	r(\lambda) &= \big \lceil \lambda_0  \rho^{k} \big \rceil \quad \textnormal{for} \quad \R^{\times}
	\\
	r(\lambda) &= \big \lceil \lambda_0 (k + 1)^\alpha \big \rceil \quad \textnormal{for} \quad \R^{\#},
\end{align}
where \(k\) is the number of restarts.

\begin{algorithm} 
	\caption{Numerical Calculation of the Loss Function (\ref{eq:lossSimple})} 
	\begin{algorithmic}[1]
		\State \(\mathrm{Initialize} \left(\lambda = \lambda_0, F_E = \lambda_0, k = 0\right)\)
		\While {\(\lambda < \lopt\)}
		\State \(k = k+1\)
		\State \(\lambda = r(\lambda)\)
		\hfill\Comment{update \(\lambda\), depending on strategy type}
		\State \(F_E = F_E + \lambda\)
		\EndWhile
		\State \(\Lo = F_E - \lopt\)
	\end{algorithmic} 
	\label{alg:lossNum}
\end{algorithm}

If \(\lopt = \lambda_k\) exactly \(k\) restarts are necessary, i.e., \(\kopt(\lambda_k) = k\). If \(\lopt = \lambda_k + 1\) an additional restart is necessary and it holds that \(\kopt(\lambda_k + 1) = k + 1\). Therefore the loss function (\ref{eq:lossSimple}) jumps between \(\lambda_k\) and \(\lambda_{k + 1}\). If the number of restarts \(\kopt\) is the same, the first expression in (\ref{eq:lossSimple}) does not depend on \(\lopt\). Hence, \(\Lo\) decreases linearly for \(\lambda_{k - 1} + 1 \leq \lopt \leq \lambda_{k}\), i.e., it holds for all \(k \geq 1\) that
\begin{align} \label{eq:lossD}
	\Lo(\lopt) > \Lo(\lambda_k) \quad \textnormal{for} \quad \lambda_{k - 1} + 1 \leq \lopt < \lambda_k.
\end{align}
These observations, which are independent of the specific strategy type, lead to the typical saw tooth shape of the loss function. This is visualized for the different strategy types in Figs. \ref{fig:lossPlus}, \ref{fig:lossTimes}, \ref{fig:lossMult}, and \ref{fig:lossPow} where the loss function is represented by the gray markers.

For further investigation, this saw tooth function is difficult to handle. The following subsections derive upper and lower bounds of the loss function for each strategy type. The objective is to identify sharp bounds that can be represented explicitly as a function of \(\lopt\) and the restart parameter.

\subsection{The Loss Function for the \(\R^+\)-RS}

The loss function of the \(\R^+\)-RS is denoted as \(\Lo^+(\lopt, \nu)\) and represented by the gray markers in Fig. \ref{fig:lossPlus}.

\begin{figure}
	\centering
	\hspace{-15pt}
	\begin{subfigure}{0.35\textwidth}
		\includegraphics[width=1\linewidth]{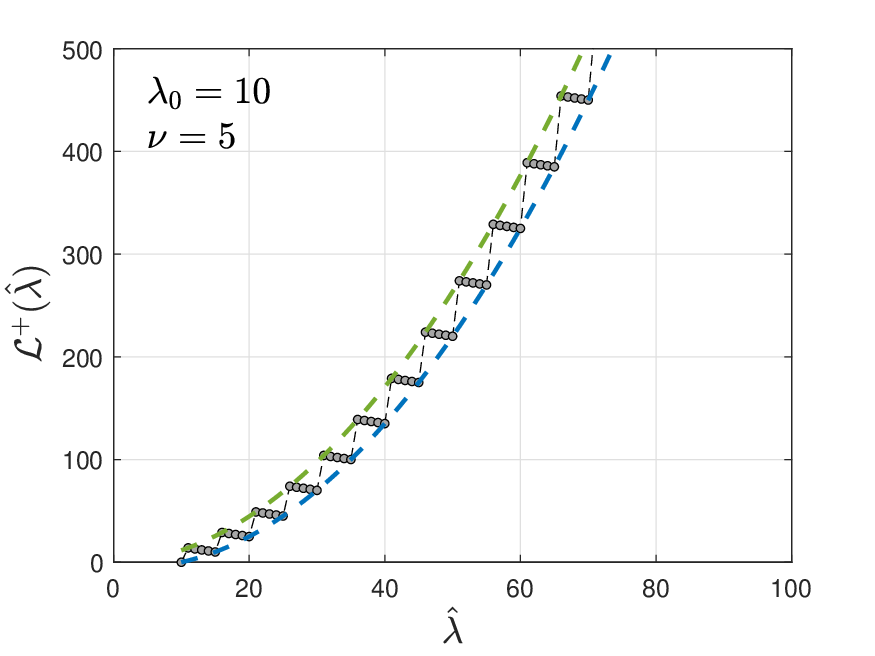}
	\end{subfigure}
	\hspace{-15pt}
	\begin{subfigure}{0.35\textwidth}
		\includegraphics[width=1\linewidth]{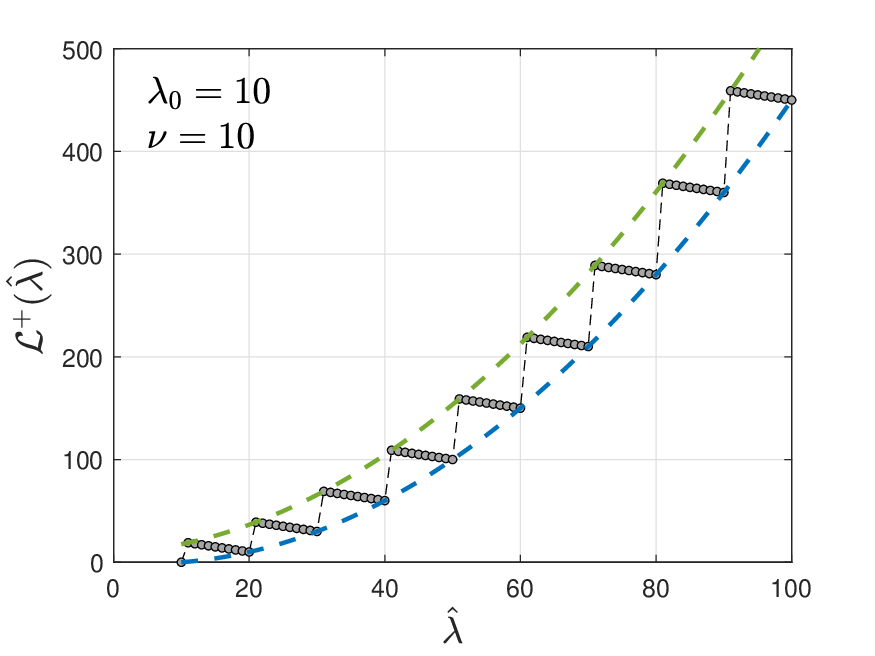}
	\end{subfigure}
	\hspace{-15pt}
	\begin{subfigure}{0.35\textwidth}
		\includegraphics[width=1\linewidth]{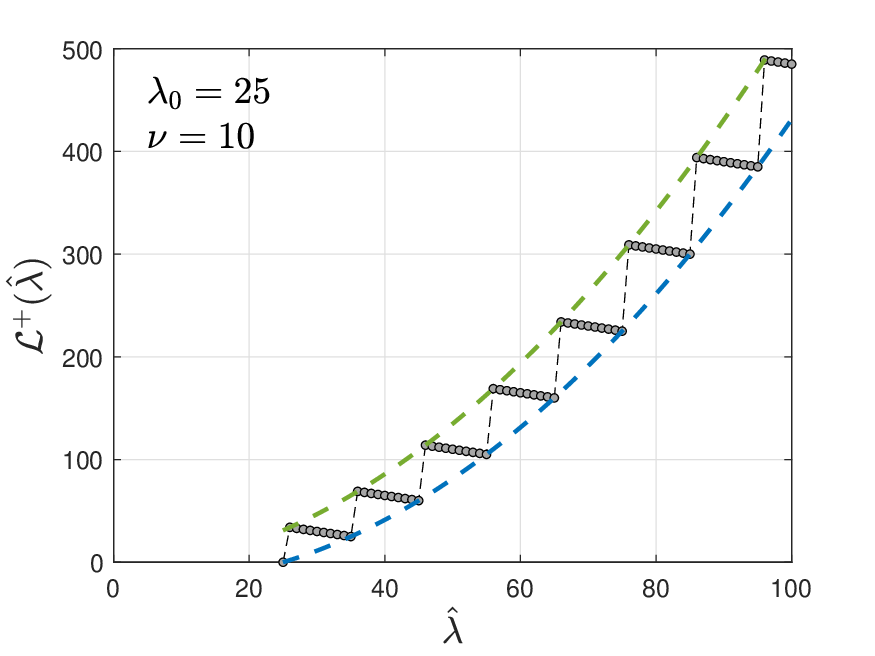}
	\end{subfigure}
	\hspace{-20pt}
	\caption{Loss function (\ref{eq:lossSimple}) for \(\R^+\) depending on \(\lopt\). Gray markers represent the numerical values of the loss function, determined with Alg. \ref{alg:lossNum}. The green dashed line shows the upper bound (\ref{eq:lossPUp}) and the blue dashed line shows the lower bound (\ref{eq:lossPLow}).}
	\label{fig:lossPlus}
\end{figure}

\begin{theorem}
	Let \(\Lo^+(\lopt, \nu)\) be the loss function (\ref{eq:lossSimple}) of the \(\R^+\)-RS with restart parameter \(\nu \in \mathbb{N} \setminus \{0\}\) and \(\lambda_k = \lambda_0 + k\nu\). Let
	\begin{align} \label{eq:lossPUp}
		\Lo^+_\mathrm{up}\left(\lopt, \nu\right) =  \halb \left(\lopt - \lambda_0 - 1\right)\left(\frac{\lopt + \lambda_0 - 1}{\nu} + 1\right) + \lambda_0 + \nu - 1.
	\end{align}
	Then \(\Lo^+_\mathrm{up}(\lopt, \nu)\) is an upper bound of \(\Lo^+(\lopt, \nu)\).
\end{theorem}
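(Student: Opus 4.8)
The plan is to collapse the saw-tooth loss into a single explicit expression in the integer $\kopt(\lopt)$ and then replace $\kopt$ by a sharp continuous upper bound. First I would compute $\kopt$: by definition it satisfies $\lambda_{\kopt} \ge \lopt > \lambda_{\kopt - 1}$, and since $\lambda_k = \lambda_0 + k\nu$ is an arithmetic progression, the partial sum is $\sumk \lambda_k = (\kopt + 1)\lambda_0 + \frac{\nu}{2}\kopt(\kopt+1)$. Substituting into the reduced loss (\ref{eq:lossSimple}) gives $\Lo^+(\lopt,\nu) = (\kopt + 1)\lambda_0 + \frac{\nu}{2}\kopt(\kopt+1) - \lopt$, a quadratic in $\kopt$ whose derivative $\lambda_0 + \nu\kopt + \frac{\nu}{2}$ is strictly positive for $\kopt \ge 0$ (equivalently, its integer increments equal $\lambda_{\kopt+1} > 0$). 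Hence the right-hand side is increasing in $\kopt$, so replacing $\kopt$ by any larger value produces an upper bound.

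Next I would exploit integrality to bound $\kopt$ from above. From $\lambda_{\kopt - 1} < \lopt$ together with $\lambda_{\kopt-1}, \lopt \in \mathbb{N}$ we get $\lambda_0 + (\kopt - 1)\nu \le \lopt - 1$, i.e. $\kopt \le \frac{\lopt - \lambda_0 - 1}{\nu} + 1$. By the monotonicity just established, substituting this bound for $\kopt$ bounds $\Lo^+$ from above, and the rest is a routine simplification: writing $m := \frac{\lopt - \lambda_0 - 1}{\nu}$ so that $\lopt = \lambda_0 + 1 + m\nu$ and $\kopt \mapsto m+1$, the quadratic collapses to $(m+1)\lambda_0 + \frac{\nu}{2}m^2 + \frac{\nu}{2}m + \nu - 1$. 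Re-expressing $m\nu = \lopt - \lambda_0 - 1$ and $\frac{2\lambda_0}{\nu} + m + 1 = \frac{\lopt + \lambda_0 - 1}{\nu} + 1$ turns this into exactly (\ref{eq:lossPUp}).

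I expect the only genuine subtlety to lie in this integrality step, because it is precisely the ``$+1$'' gained from $\lambda_{\kopt - 1} \le \lopt - 1$ (rather than the cruder estimate $\kopt \le \frac{\lopt - \lambda_0}{\nu} + 1$ coming from $\lceil x \rceil < x + 1$) that makes the resulting bound sharp. As a consistency check I would verify the sharpness through the saw-tooth geometry: by (\ref{eq:lossD}) the loss attains its maximum on each interval $[\lambda_{k-1}+1, \lambda_k]$ at the left endpoint $\lopt = \lambda_{k-1}+1$, and evaluating both $\Lo^+$ and $\Lo^+_\mathrm{up}$ there should give equality, confirming that $\Lo^+_\mathrm{up}$ touches every tooth tip and is therefore the tightest smooth envelope of this form. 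Finally I would dispose of the degenerate case $\lopt \le \lambda_0$ separately (there $\kopt = 0$, no restart is needed, and the step $\lambda_{\kopt - 1} < \lopt$ is vacuous), running the main argument only for $\lopt > \lambda_0$.
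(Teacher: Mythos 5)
Your proposal is correct, and it reaches the bound by a genuinely different (and arguably cleaner) logical route than the paper, even though the underlying algebra coincides. The paper evaluates the loss exactly at the tooth tips $\lopt = \lambda_k + 1$, shows $\Lo^+(\lambda_k + 1, \nu) = \Lo^+_\mathrm{up}(\lambda_k + 1, \nu)$ via the arithmetic-series sum and the substitution $k = (\lambda_k - \lambda_0)/\nu$, and then extends to all $\lopt$ by combining the sawtooth property (\ref{eq:lossD}) with the monotonicity of $\Lo^+_\mathrm{up}$ in $\lopt$; both of these geometric facts are load-bearing there. You instead fix $\lopt$, write the loss as the quadratic $f(k) = (k+1)\lambda_0 + \frac{\nu}{2}k(k+1) - \lopt$ evaluated at the integer $\kopt$, observe $f$ is increasing on $k \geq 0$ (your derivative $\lambda_0 + \nu k + \frac{\nu}{2}$ and increment $f(k+1)-f(k) = \lambda_{k+1}$ are both correct), and bound $\kopt \leq \frac{\lopt - \lambda_0 - 1}{\nu} + 1$ from $\lambda_{\kopt - 1} \leq \lopt - 1$; substituting this real value into $f$ and simplifying with $m = \frac{\lopt - \lambda_0 - 1}{\nu}$ does yield $(m+1)\lambda_0 + \frac{\nu}{2}m^2 + \frac{\nu}{2}m + \nu - 1$, which is identically (\ref{eq:lossPUp}) — I verified the algebra. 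What your version buys is a pointwise proof for every $\lopt > \lambda_0$ that needs neither (\ref{eq:lossD}) nor the fact that the envelope increases in $\lopt$; the same ``$+1$'' integrality gain drives the sharpness in both proofs, and your tightness check at the left endpoints $\lopt = \lambda_{k-1} + 1$ (where $\kopt = m+1$ exactly) is precisely the paper's equality at the tips.

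One loose end: the case $\lopt = \lambda_0$ is not actually vacuous, because the first factor of (\ref{eq:lossPUp}) is negative there ($\lopt - \lambda_0 - 1 = -1$), so $\Lo^+_\mathrm{up}(\lambda_0, \nu) \geq 0 = \Lo^+(\lambda_0, \nu)$ requires a short explicit check: $\Lo^+_\mathrm{up}(\lambda_0, \nu) = \lambda_0\left(1 - \frac{1}{\nu}\right) + \nu + \frac{1}{2\nu} - \frac{3}{2} \geq 0$ for $\nu \geq 1$, with equality possible at $\nu = 1$. The paper performs exactly this computation; your sketch waves it off as degenerate, so add the two lines to close the argument.
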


\begin{proof}
	The number of required restarts at \(\lopt = \lambda_k + 1\) is \(\kopt(\lambda_k + 1) = k + 1\). It holds that \(\lambda_k = \lambda_0 + k \nu\) and therefore
	\begin{align} 
		\Lo^+(\lambda_k + 1, \nu) &= \sum_{j = 0}^{k + 1} \lambda_j - \lambda_k - 1 = \sum_{j = 0}^{k + 1} \left(\lambda_0 + j \nu\right) -\lambda_0 - k \nu - 1 \nonumber
		\\
		&= \left(k + 2\right) \lambda_0 + \nu \sum_{j = 0}^{k + 1} j -\lambda_0 - k \nu - 1 \nonumber\\
		&= \left(k + 2\right) \lambda_0 + \frac{\nu}{2} \left(k + 2\right) \left(k + 1\right) -\lambda_0 - k \nu - 1 \nonumber
		\\
		&= \left(k + 1\right) \lambda_0 + \frac{\nu}{2} \left(k^2 + 3 k + 2\right) - k \nu- 1 \nonumber
		\\
		&= \left(k + 1\right) \lambda_0 + \frac{\nu}{2} \left(k^2 + k + 2\right) - 1 \nonumber
		\\
		&= \left(k + 1\right) \lambda_0 + \frac{\nu}{2} \left(k^2 + k\right) + \nu - 1.
	\end{align}
	Because 
	\begin{align} \label{eq:lossPlusk}
		\lambda_k = \lambda_0 + k \nu \Leftrightarrow k = \frac{\lambda_k - \lambda_0}{\nu},
	\end{align}
	it follows that for all \(k \geq 0\)
	\begin{align}
		\Lo^+(\lambda_k + 1, \nu) &= \left(\frac{\lambda_k - \lambda_0}{\nu} + 1\right) \lambda_0 + \frac{\nu}{2} \left(\left(\frac{\lambda_k - \lambda_0}{\nu}\right)^2 + \frac{\lambda_k - \lambda_0}{\nu}\right) + \nu - 1 \nonumber
		\\
		&= \frac{\lambda_0}{\nu}\left(\lambda_k - \lambda_0\right) + \lambda_0 + \frac{\nu}{2} \left(\lambda_k - \lambda_0\right) \left(\frac{\lambda_k - \lambda_0}{\nu^2} + \frac{1}{\nu}\right) + \nu - 1 \nonumber
		\\
		&= \left(\lambda_k - \lambda_0\right)\left(\frac{\lambda_0}{\nu} + \frac{\lambda_k - \lambda_0}{2\nu} + \halb\right) + \lambda_0 + \nu - 1 \nonumber
		\\
		&= \left(\lambda_k - \lambda_0\right)\left(\frac{2 \lambda_0 + \lambda_k - \lambda_0}{2\nu} + \halb\right) + \lambda_0 + \nu - 1 \nonumber
		\\ 
		&= \halb \left(\lambda_k - \lambda_0\right)\left(\frac{\lambda_k + \lambda_0}{\nu} + 1\right) + \lambda_0 + \nu - 1 \nonumber
		\\
		& = \halb \left(\lambda_k + 1 - \lambda_0 - 1\right)\left(\frac{\lambda_k + 1 + \lambda_0 - 1}{\nu} + 1\right) + \lambda_0 + \nu - 1\nonumber
		\\
		& = \Lo^+_\mathrm{up}(\lambda_k + 1, \nu).
	\end{align}
	For \(\lopt = \lambda_0\) it holds that
	\begin{align}
		\Lo^+_\mathrm{up}(\lambda_0, \nu) &= -\halb \left(\frac{2 \lambda_0 - 1}{\nu} + 1\right) + \lambda_0 + \nu - 1 \nonumber
		\\
		&= \lambda_0 \left(1 - \frac{1}{\nu}\right) + \nu + \frac{1}{2 \nu} - \frac{3}{2} \nonumber
		\\
		&\geq  \nu + \frac{1}{2 \nu} - \frac{3}{2} \geq 0 = \Lo^+(\lambda_0, \nu)
	\end{align}
	for \(\nu \geq 1\). In (\ref{eq:lossD}) it was shown that \(\Lo^+(\lopt, \nu)\) decreases between \(\lambda_{k} + 1\) and \(\lambda_{k + 1}\). Because \(\Lo^+_\mathrm{up}\) is an increasing function of \(\lopt\) it holds for all \(\lopt \geq \lambda_{0}\) that
	\begin{align}
		\Lo^+_\mathrm{up}(\lopt, \nu) \geq \Lo^+(\lopt, \nu).
	\end{align}
\end{proof}
\noindent
The upper bound (\ref{eq:lossPUp}) is represented by the green dashed line in Fig. \ref{fig:lossPlus}.

\begin{theorem} \label{th:pLow}
	Let \(\Lo^+(\lopt, \nu)\) be the loss function (\ref{eq:lossSimple}) of the \(\R^+\)-RS with restart parameter \(\nu \in \mathbb{N} \setminus \{0\}\) and \(\lambda_k = \lambda_0 + k\nu\). Let
	\begin{align} \label{eq:lossPLow}
		\Lo^+_\mathrm{low}\left(\lopt, \nu\right) :=  \halb \left(\lopt - \lambda_0\right)\left(\frac{\lopt + \lambda_0}{\nu} - 1\right).
	\end{align}
	Then \(\Lo^+_\mathrm{low}(\lopt, \nu)\) is a lower bound of \(\Lo^+(\lopt, \nu)\).
\end{theorem}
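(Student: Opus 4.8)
The plan is to mirror the structure of the preceding upper-bound proof, but to evaluate the loss at the \emph{right} endpoints \(\lambda_k\) of each saw-tooth segment, where \(\Lo^+\) attains its minimum, rather than at the left endpoints \(\lambda_k+1\) where it attains its maximum. First I would compute \(\Lo^+(\lambda_k,\nu)\) exactly. At \(\lopt=\lambda_k\) precisely \(k\) restarts are needed, so \(\kopt(\lambda_k)=k\) and
\[
\Lo^+(\lambda_k,\nu)=\sum_{j=0}^{k}\lambda_j-\lambda_k=\sum_{j=0}^{k}(\lambda_0+j\nu)-(\lambda_0+k\nu).
\]
Evaluating the arithmetic sum gives \(\Lo^+(\lambda_k,\nu)=k\lambda_0+\tfrac{\nu}{2}k(k-1)\). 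Substituting \(k=(\lambda_k-\lambda_0)/\nu\) from (\ref{eq:lossPlusk}) and collecting terms --- the same kind of algebra carried out in the previous proof --- I expect this to simplify to \(\halb(\lambda_k-\lambda_0)\bigl(\tfrac{\lambda_k+\lambda_0}{\nu}-1\bigr)=\Lo^+_\mathrm{low}(\lambda_k,\nu)\). Hence the proposed bound is met with equality at every saw-tooth minimum \(\lambda_k\) (including \(\lopt=\lambda_0\), where both sides vanish).

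The second step extends the comparison from the grid points \(\lambda_k\) to arbitrary \(\lopt\). On each interval \(\lambda_{k-1}<\lopt\le\lambda_k\) the number of restarts is constant, \(\kopt(\lopt)=k\), so the first sum in (\ref{eq:lossSimple}) equals a constant \(C_k:=\sum_{j=0}^{k}\lambda_j\) and \(\Lo^+(\lopt,\nu)=C_k-\lopt\) is linear with slope \(-1\). I would then study the difference \(\phi(\lopt):=\Lo^+(\lopt,\nu)-\Lo^+_\mathrm{low}(\lopt,\nu)\), treated as a function of a real variable on this interval. Since \(\Lo^+_\mathrm{low}\) is a quadratic with derivative \(\lopt/\nu-\halb\), the difference satisfies \(\phi'(\lopt)=-1-(\lopt/\nu-\halb)=-\halb-\lopt/\nu<0\) for every \(\lopt>0\). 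Thus \(\phi\) is strictly decreasing on the interval, and since \(\phi(\lambda_k)=0\) by the first step, it follows that \(\phi(\lopt)\ge 0\) throughout \(\lambda_{k-1}<\lopt\le\lambda_k\). Ranging over all \(k\ge 1\) (and including the endpoint \(\lambda_0\)) yields \(\Lo^+_\mathrm{low}(\lopt,\nu)\le\Lo^+(\lopt,\nu)\) for all \(\lopt\ge\lambda_0\), with the bound sharp at every \(\lambda_k\).

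The main obstacle --- and the reason I prefer the difference argument to a literal copy of the upper-bound proof --- is the closing monotonicity step. The upper-bound proof could finish by invoking that \(\Lo^+_\mathrm{up}\) is increasing; the symmetric move here would require \(\Lo^+_\mathrm{low}\) to be increasing on \([\lambda_0,\infty)\). But \(\Lo^+_\mathrm{low}\) is an upward-opening parabola with vertex at \(\lopt=\nu/2\), so it is increasing only once \(\lopt>\nu/2\), which can fail when \(\lambda_0<\nu/2\); in that regime the naive argument breaks down. Working instead with \(\phi\), whose derivative is negative for \emph{every} \(\lopt>0\) regardless of the relative sizes of \(\lambda_0\) and \(\nu\), sidesteps this gap entirely. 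The only routine component is the algebraic simplification establishing \(\Lo^+(\lambda_k,\nu)=\Lo^+_\mathrm{low}(\lambda_k,\nu)\); everything else is a one-line monotonicity observation.
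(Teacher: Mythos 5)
Your proposal is correct, and its first half coincides exactly with the paper's proof: the paper likewise evaluates the loss at the sawtooth minima, computing \(\Lo^+(\lambda_k,\nu)=k\lambda_0+\frac{\nu}{2}k(k-1)\) from \(\kopt(\lambda_k)=k\), substituting \(k=(\lambda_k-\lambda_0)/\nu\), and obtaining the equality \(\Lo^+(\lambda_k,\nu)=\Lo^+_\mathrm{low}(\lambda_k,\nu)\) for all \(k\geq 0\). Where you genuinely diverge is the closing step. The paper invokes the sawtooth decrease (\ref{eq:lossD}) to get \(\Lo^+(\lopt,\nu)>\Lo^+(\lambda_k,\nu)\) for \(\lambda_{k-1}+1\leq\lopt<\lambda_k\) and then asserts, without further comment, the comparison \(\Lo^+_\mathrm{low}(\lambda_k,\nu)>\Lo^+_\mathrm{low}(\lopt,\nu)\). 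As you correctly observe, this is not a plain monotonicity statement: \(\Lo^+_\mathrm{low}\) is an upward-opening parabola with vertex at \(\lopt=\nu/2\), which can lie well to the right of \(\lambda_0\). The paper's asserted inequality is nevertheless true, because the strictly convex parabola agrees with \(\Lo^+\) at the grid points, where the values \(\sum_{j=0}^{k-1}\lambda_j\) strictly increase in \(k\), so on \([\lambda_{k-1},\lambda_k]\) the parabola is dominated by its value at the right endpoint; but this convexity justification is left to the reader. Your difference-function argument, \(\phi'(\lopt)=-1-\left(\lopt/\nu-\halb\right)=-\halb-\lopt/\nu<0\) combined with \(\phi(\lambda_k)=0\), packages the slope \(-1\) of \(\Lo^+\) on each constant-\(\kopt\) interval and the parabola comparison into a single computation, making explicit---and independent of any convexity consideration---precisely the step the paper's chain leaves implicit. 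The two routes prove the same sharp statement, with equality attained at every \(\lambda_k\); yours buys a self-contained closing argument at the one point where the paper's proof requires the reader to supply a small missing justification.
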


\begin{proof}
	The number of required restarts at \(\lopt = \lambda_k\) is \(\kopt(\lambda_k) = k\). It follows from (\ref{eq:lossPlusk}) that
	\begin{align} \label{eq:lossP0}
		\Lo^+(\lambda_k, \nu) &= \sum_{j = 0}^{k} \lambda_j - \lambda_k = \sum_{j = 0}^{k - 1} \lambda_j \nonumber
		\\
		&= \sum_{j = 0}^{k - 1} \left(\lambda_0 + j \nu\right) = k \lambda_0 + \nu \sum_{j = 0}^{k - 1} j \nonumber
		\\
		&= k \lambda_0 + \frac{\nu}{2} k \left(k - 1\right) \nonumber
		\\
		&= \frac{\lambda_k - \lambda_0}{\nu} \lambda_0 +  \frac{\nu}{2} \left(\frac{\lambda_k - \lambda_0}{\nu}\right) \left(\frac{\lambda_k - \lambda_0}{\nu} - 1\right) \nonumber \\
		&= \frac{\lambda_k - \lambda_0}{\nu} \lambda_0 + \frac{\left(\lambda_k - \lambda_0\right)^2}{2\nu} - \frac{\lambda_k - \lambda_0}{2} \nonumber
		\\
		&= \left(\lambda_k - \lambda_0\right) \left(\frac{\lambda_0}{\nu} +\frac{\lambda_k - \lambda_0}{2\nu} - \halb\right) \nonumber
		\\ 
		&= \left(\lambda_k - \lambda_0\right) \left(\frac{2 \lambda_0 + \lambda_k - \lambda_0}{2\nu}  - \halb\right) \nonumber
		\\
		&= \halb \left(\lambda_k - \lambda_0\right)\left(\frac{\lambda_k + \lambda_0}{\nu} - 1\right) \nonumber
		\\
		&= \Lo^+_\mathrm{low}(\lambda_k, \nu),
	\end{align}
	for all \(k \geq 0\). In (\ref{eq:lossD}) it was shown that \(\Lo^+(\lopt, \nu)\) decreases between \(\lambda_{k - 1} + 1\) and \(\lambda_{k}\). Therefore, it follows for \(\lambda_{k - 1}  + 1 \leq \lopt < \lambda_{k}\) by using (\ref{eq:lossP0}) that
	\begin{align}
		\Lo^+(\lopt, \nu) > \Lo^+(\lambda_k, \nu) &= \Lo^+_\mathrm{low}(\lambda_k, \nu) = \halb \left(\lambda_k - \lambda_0\right)\left(\frac{\lambda_k + \lambda_0}{\nu} - 1\right) \nonumber\\
		&> \halb \left(\lopt - \lambda_0\right)\left(\frac{\lopt + \lambda_0}{\nu} - 1\right)  = \Lo^+_{\mathrm{low}}(\lopt, \nu),
	\end{align}
	which holds for all \(k \geq 1\). Because (\ref{eq:lossP0}) includes the case \(\lambda_0\), it follows for all \(\lopt \geq \lambda_0\) that
	\begin{align}
		\Lo^+(\lopt, \nu) \geq \Lo^+_\mathrm{low}(\lopt, \nu).
	\end{align}
\end{proof}
\noindent
The lower bound (\ref{eq:lossPLow}) is represented by the blue dashed line in Fig. \ref{fig:lossPlus}.

\subsection{The Loss Function for the \(\R^{\times}\)-RS}
The loss function of the \(\R^\times\)-RS is denoted as \(\Lo^\times(\lopt, \rho)\) and represented by the gray markers in Fig.~\ref{fig:lossTimes}.

\begin{figure}
	\centering
	\hspace{-15pt}
	\begin{subfigure}{0.35\textwidth}
		\includegraphics[width=1\linewidth]{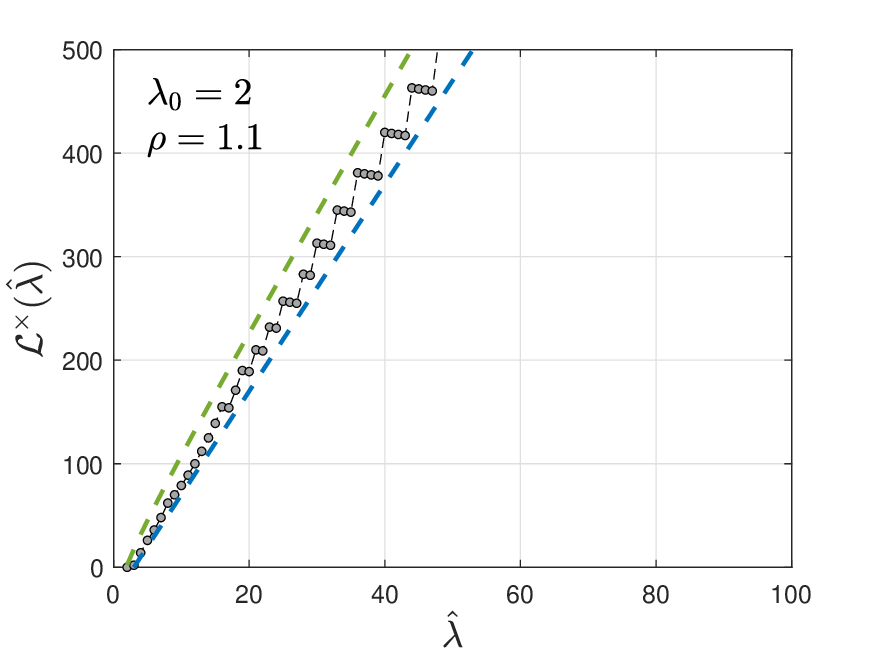}
	\end{subfigure}
	\hspace{-15pt}
	\begin{subfigure}{0.35\textwidth}
		\includegraphics[width=1\linewidth]{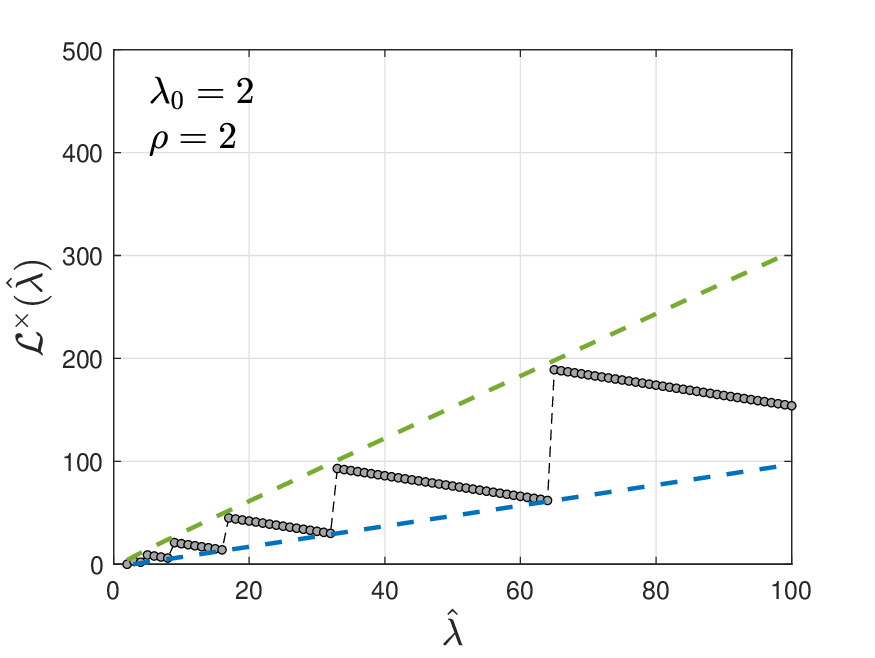}
	\end{subfigure}
	\hspace{-15pt}
	\begin{subfigure}{0.35\textwidth}
		\includegraphics[width=1\linewidth]{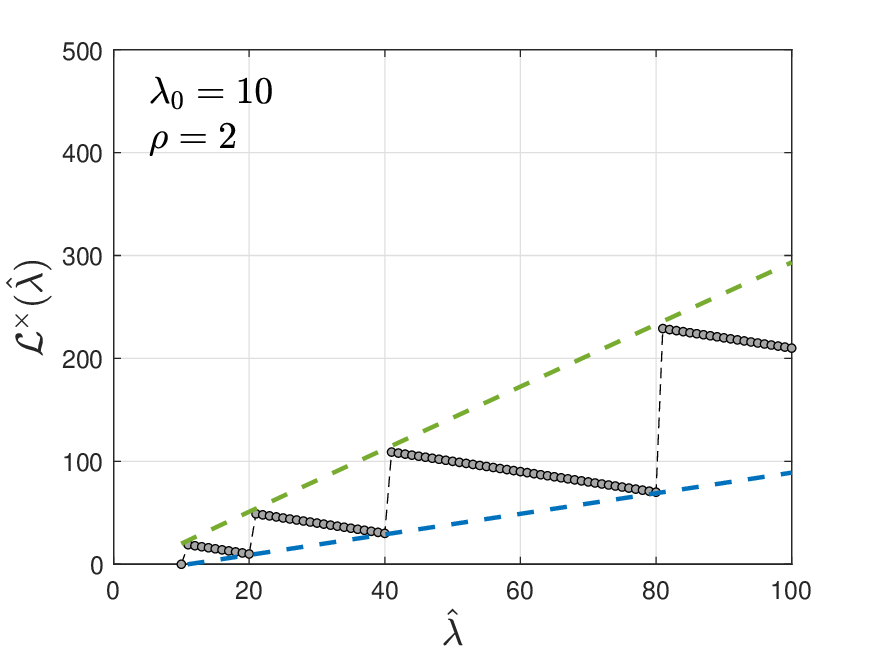}
	\end{subfigure}
	\hspace{-20pt}
	\caption{Loss function (\ref{eq:lossSimple}) for \(\R^\times\) depending on \(\lopt\). Gray markers represent the numerical values of the loss function, determined with Alg. \ref{alg:lossNum}. The green dashed line shows the upper bound (\ref{eq:lossModUp}) and the blue dashed line shows the lower bound (\ref{eq:lossModLow}).}
	\label{fig:lossTimes}
\end{figure}

\begin{theorem} 
	Let	\(\Lo^\times(\lopt, \rho)\) be the loss function (\ref{eq:lossSimple}) of the \(\R^\times\)-RS with restart parameter \(\rho > 1\) and \(\lambda_{k} = \lceil \lambda_0 \rho^k \rceil\). Let
	\begin{align} \label{eq:lossModUp}
		\Lo^{\times}_\mathrm{up}\left(\lopt, \rho\right) :=  \lopt \left(\rho + \frac{1}{\rho - 1}\right) - \frac{\lambda_0}{\rho - 1} + \frac{\ln\left(\lopt / \lambda_0\right)}{\ln\left(\rho\right)}.
	\end{align}
	Then \(\Lo^{\times}_\mathrm{up}(\lopt, \rho)\) is an upper bound of \(\Lo^\times(\lopt, \rho)\).
\end{theorem}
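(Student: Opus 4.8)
The plan is to bound \(\Lo^\times(\lopt,\rho)\) directly for each admissible \(\lopt\), working with \(\kopt:=\kopt(\lopt)\) rather than first reducing to the saw tooth peaks. Starting from \(\Lo^\times(\lopt,\rho)=\sum_{j=0}^{\kopt}\lambda_j-\lopt\) with \(\lambda_j=\lceil\lambda_0\rho^j\rceil\), the two parts of the claimed bound have a transparent origin: the geometric partial sum of the \(\lambda_0\rho^j\) produces the polynomial part \(\lopt\rho+\tfrac{\lopt-\lambda_0}{\rho-1}\), while each ceiling contributes at most \(+1\), and the number of summands is \(\kopt+1\), which equals \(\tfrac{\ln(\lopt/\lambda_0)}{\ln\rho}\) up to an additive constant — this is the logarithmic term. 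Accordingly I would replace every ceiling by the strict estimate \(\lambda_j<\lambda_0\rho^j+1\) and keep careful track of the accumulated \(+1\)'s.

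The decisive input is the elementary relation that, by definition of \(\kopt\), one has \(\lambda_{\kopt-1}<\lopt\); since both quantities are natural numbers, this sharpens to \(\lambda_0\rho^{\kopt-1}\le\lambda_{\kopt-1}\le\lopt-1\). This single inequality does double duty. First, it bounds the restart count: \(\rho^{\kopt-1}\le(\lopt-1)/\lambda_0<\lopt/\lambda_0\) yields \(\kopt\le\tfrac{\ln(\lopt/\lambda_0)}{\ln\rho}+1\), which is exactly the source of the logarithmic term. Second, after splitting off the largest summand as \(\lambda_{\kopt}<\lambda_0\rho^{\kopt}+1\le\rho(\lopt-1)+1\) and estimating the remainder by \(\sum_{j=0}^{\kopt-1}\lambda_j<\lambda_0\tfrac{\rho^{\kopt}-1}{\rho-1}+\kopt\le\tfrac{\rho(\lopt-1)-\lambda_0}{\rho-1}+\kopt\), it controls the whole polynomial part in terms of \(\lopt\) alone. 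Substituting the count estimate for the trailing \(\kopt\) then produces an explicit upper bound of the shape of \(\Lo^\times_{\mathrm{up}}\).

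The main obstacle — the only place where something must be checked rather than merely computed — is the bookkeeping of the additive constants left over from the ceilings, together with the realization that integrality is essential. Collecting the two stray \(+1\)'s (one from the last summand, one from the \(\kopt\le\tfrac{\ln(\lopt/\lambda_0)}{\ln\rho}+1\) step), the algebraic identity \(\tfrac{\rho(\lopt-1)-\lambda_0}{\rho-1}+\rho(\lopt-1)-\lopt=\lopt\rho+\tfrac{\lopt-\lambda_0}{\rho-1}-\bigl(\rho+\tfrac{\rho}{\rho-1}\bigr)\) shows that the claim reduces to the parameter-free inequality \(\rho+\tfrac{\rho}{\rho-1}\ge2\), which holds comfortably for all \(\rho>1\) (the left-hand side is minimized at \(\rho=2\) with value \(4\)), so the leftover constants are absorbed with margin to spare. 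It is precisely the sharpening \(\lambda_{\kopt-1}\le\lopt-1\) that supplies this margin; using only \(\lambda_{\kopt-1}<\lopt\) would leave an uncompensated surplus of \(2\). I would then dispose of the degenerate case \(\kopt=0\) (that is, \(\lopt=\lambda_0\), where no \(\lambda_{\kopt-1}\) exists) separately, noting \(\Lo^\times(\lambda_0,\rho)=0\le\lambda_0\rho=\Lo^\times_{\mathrm{up}}(\lambda_0,\rho)\). Because the estimate is obtained for each \(\lopt\) on its own, no separate monotonicity argument for \(\Lo^\times_{\mathrm{up}}\) is needed, in contrast to the \(\R^+\) proof.
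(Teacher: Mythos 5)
Your proof is correct, and it takes a recognizably different route from the paper's. The paper never bounds \(\Lo^\times(\lopt,\rho)\) at an arbitrary \(\lopt\) directly: it evaluates the loss only at the saw-tooth peaks \(\lopt=\lambda_k+1\) (where \(\kopt=k+1\)), obtains the bound there with no slack constant to absorb, and then extends to all \(\lopt\geq\lambda_0\) by combining the decrease property (\ref{eq:lossD}) with the observation that \(\Lo^\times_\mathrm{up}\) is increasing in \(\lopt\) (since \(\rho+\tfrac{1}{\rho-1}>0\)). You instead estimate pointwise at every \(\lopt>\lambda_0\), and the mechanism that makes this work is exactly the one you flagged: the integrality sharpening \(\lambda_{\kopt-1}\leq\lopt-1\), hence \(\lambda_0\rho^{\kopt-1}\leq\lopt-1\), which simultaneously controls the restart count (\(\kopt<\tfrac{\ln(\lopt/\lambda_0)}{\ln\rho}+1\)), the last summand (\(\lambda_{\kopt}<\rho(\lopt-1)+1\)), and the geometric remainder. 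I checked your bookkeeping: the algebraic identity \(\tfrac{\rho(\lopt-1)-\lambda_0}{\rho-1}+\rho(\lopt-1)-\lopt=\lopt\rho+\tfrac{\lopt-\lambda_0}{\rho-1}-\bigl(\rho+\tfrac{\rho}{\rho-1}\bigr)\) is valid, the two stray \(+1\)'s are absorbed since \(\rho+\tfrac{\rho}{\rho-1}\geq 4>2\) for all \(\rho>1\) (minimum at \(\rho=2\)), and your remark that the unsharpened bound \(\lambda_{\kopt-1}<\lopt\) would leave a surplus of \(2\) is accurate — so your chain in fact establishes the slightly stronger estimate \(\Lo^\times(\lopt,\rho)<\Lo^\times_\mathrm{up}(\lopt,\rho)-\bigl(\rho+\tfrac{\rho}{\rho-1}-2\bigr)\) for \(\lopt>\lambda_0\), with the degenerate case \(\lopt=\lambda_0\) handled correctly by \(\Lo^\times(\lambda_0,\rho)=0\leq\lambda_0\rho=\Lo^\times_\mathrm{up}(\lambda_0,\rho)\). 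What each approach buys: yours is self-contained (no appeal to the saw-tooth lemma or to monotonicity of the bound in \(\lopt\)) and makes explicit where the integrality of \(\lopt\) and the \(\lambda_k\) enters, at the cost of a constant of slack; the paper's peak-plus-monotonicity template avoids any slack inequality and is reused verbatim for all four strategy types, which keeps the per-theorem algebra lighter and the exposition uniform.
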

\begin{proof}
	The number of required restarts at \(\lopt = \lambda_{k} + 1\) is \(\kopt(\lambda_k + 1) = k + 1\). It holds that
	\begin{align} \label{eq:lambdaMod}
		\lambda_0 \rho^k \leq \lceil \lambda_0 \rho^k \rceil = \lambda_k < \lambda_0 \rho^k + 1.
	\end{align}
	Therefore,
	\begin{align} \label{eq:loss0}
		\Lo^\times(\lambda_k + 1,  \rho) &= \sum_{j = 0}^{k + 1} \lambda_j - \lambda_k - 1 = \sum_{j = 0}^{k - 1} \lambda_j + \lambda_{k + 1} - 1 \nonumber
		\\
		&< \sum_{j = 0}^{k - 1} \left(\lambda_0\rho^j + 1\right) + \lambda_0 \rho^{k + 1} + 1 - 1 = \lambda_0 \sum_{j = 0}^{k - 1} \rho^j + k +\lambda_0 \rho^{k + 1} \nonumber
		\\
		&= \lambda_0 \frac{\rho^k - 1}{\rho - 1} + k + \lambda_0 \rho^{k + 1}.
	\end{align}
	It follows from (\ref{eq:lambdaMod}) that
	\begin{align}
		\lambda_0 \rho^k < \lambda_k + 1 \Leftrightarrow \rho^k < \frac{\lambda_k + 1}{\lambda_0} \Leftrightarrow k < \frac{\ln\left(\left(\lambda_k + 1\right)/ \lambda_0\right)}{\ln\left(\rho\right)}.
	\end{align}
	Inserting this into (\ref{eq:loss0}), then it follows for all \(k \geq 0\) that
	\begin{align}
		\Lo^\times(\lambda_k + 1, \rho) &< \lambda_0 \frac{\frac{\lambda_k + 1}{\lambda_0} - 1}{\rho - 1} + \frac{\ln\left(\left(\lambda_k + 1\right)  / \lambda_0\right)}{\ln\left(\rho\right)} + \lambda_0 \frac{\lambda_k + 1}{\lambda_0} \rho \nonumber
		\\
		&= \frac{\lambda_k + 1}{\rho - 1} - \frac{\lambda_0}{\rho - 1} + \left(\lambda_k + 1\right) \rho + \frac{\ln\left(\left(\lambda_k + 1\right)  / \lambda_0\right)}{\ln\left(\rho\right)} \nonumber
		\\
		&= \left(\lambda_k + 1\right) \left(\rho + \frac{1}{\rho - 1}\right) - \frac{\lambda_0}{\rho - 1} + \frac{\ln\left(\left(\lambda_k + 1\right)  / \lambda_0\right)}{\ln\left(\rho\right)} = \Lo^\times_\mathrm{up}(\lambda_k + 1, \rho).
	\end{align}
	For \(\lopt = \lambda_0\) it holds that
	\begin{align}
		\Lo^\times(\lambda_0, \rho) = 0 < \lambda_0 \rho = \Lo^\times_\mathrm{up}(\lopt, \rho).
	\end{align}
	In (\ref{eq:lossD}) it was shown that \(\Lo^\times(\lopt, \rho)\) decreases between \(\lambda_{k} + 1\) and \(\lambda_{k + 1}\). Because \(\left(\rho + \frac{1}{\rho - 1}\right) > 0\) for \(\rho > 1\) it holds that \(\Lo^\times_\mathrm{up}\) increases with \(\lopt\). Therefore,
	\begin{align}
		\Lo^\times(\lopt, \rho) < \Lo^\times_\mathrm{up}(\lopt, \rho)
	\end{align}
	for all \(\lopt \geq \lambda_{0}\).
\end{proof}
\noindent
The upper bound (\ref{eq:lossModUp}) is represented by the green dashed line in Fig. \ref{fig:lossTimes}. 

\begin{theorem} 
	Let \(\Lo^\times(\lopt, \rho)\) be the loss function (\ref{eq:lossSimple}) of the \(\R^\times\)-RS with restart parameter \(\rho > 1\) and \(\lambda_{k} = \lceil \lambda_0 \rho^k \rceil\). Let
	\begin{align} \label{eq:lossModLow}
		\Lo^{\times}_\mathrm{low}\left(\lopt, \rho\right) := \frac{\lopt - 1 - \lambda_0}{\rho - 1}.
	\end{align}
	Then \(\Lo^{\times}_\mathrm{low}(\lopt, \rho)\)	is a lower bound of \(\Lo^\times(\lopt, \rho)\).
\end{theorem}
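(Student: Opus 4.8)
The plan is to mirror the lower-bound argument for the $\R^+$-RS (Theorem~\ref{th:pLow}): evaluate the loss exactly at the points $\lopt = \lambda_k$, where each tooth of the saw-tooth attains its minimum, bound that value below by the target expression, and then extend the inequality across the whole interval $\lambda_{k-1}+1 \le \lopt \le \lambda_k$ using the monotonicity facts already in hand. First I would use that $\kopt(\lambda_k) = k$, so the final summand cancels the subtrahend and
\begin{align}
	\Lo^\times(\lambda_k, \rho) = \sum_{j = 0}^{k} \lambda_j - \lambda_k = \sum_{j = 0}^{k - 1} \lambda_j.
\end{align}
Since $\lambda_j = \lceil \lambda_0 \rho^j \rceil \geq \lambda_0 \rho^j$, dropping the ceilings \emph{downward} (the correct direction for a lower bound) turns this partial sum into a clean geometric series, giving $\sum_{j=0}^{k-1}\lambda_j \geq \lambda_0(\rho^k-1)/(\rho-1)$.

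Next I would convert the $\rho^k$ term back into $\lambda_k$. From the ceiling estimate (\ref{eq:lambdaMod}) we have $\lambda_k < \lambda_0 \rho^k + 1$, i.e. $\lambda_0 \rho^k > \lambda_k - 1$, which is again the right direction, and therefore
\begin{align}
	\Lo^\times(\lambda_k, \rho) \geq \frac{\lambda_0 \rho^k - \lambda_0}{\rho - 1} > \frac{\lambda_k - 1 - \lambda_0}{\rho - 1} = \Lo^\times_\mathrm{low}(\lambda_k, \rho).
\end{align}
To pass from these discrete points to arbitrary $\lopt$, I would invoke (\ref{eq:lossD}): on each tooth $\lambda_{k-1}+1 \le \lopt \le \lambda_k$ the loss decreases and hence is minimized at $\lopt = \lambda_k$, while $\Lo^\times_\mathrm{low}$ is increasing in $\lopt$ (its slope $1/(\rho-1)$ is positive for $\rho > 1$) and hence is maximized on the tooth at $\lopt = \lambda_k$. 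Chaining these two monotonicities with the inequality above yields
\begin{align}
	\Lo^\times(\lopt, \rho) \geq \Lo^\times(\lambda_k, \rho) > \Lo^\times_\mathrm{low}(\lambda_k, \rho) \geq \Lo^\times_\mathrm{low}(\lopt, \rho),
\end{align}
which is the claim for all $\lopt \geq \lambda_0$. The base case is immediate, since $\Lo^\times(\lambda_0, \rho) = 0 > -1/(\rho-1) = \Lo^\times_\mathrm{low}(\lambda_0, \rho)$.

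The main obstacle is keeping the two ceiling-induced estimates pointing in compatible directions: the exact loss involves $\sum_j \lceil \lambda_0 \rho^j \rceil$, which is not itself a geometric sum, so I must lower-bound it by \emph{removing} ceilings, yet I also need to re-express the resulting $\rho^k$ through $\lambda_k$, which requires the \emph{opposite} ceiling estimate $\lambda_k - 1 < \lambda_0 \rho^k$. The observation that legitimizes the extension step is that the loss is minimized and the bound is maximized at the \emph{same} abscissa $\lambda_k$ of each tooth, so the strict slack gained there is not forfeited at the interior points; beyond this bookkeeping the computation is routine.
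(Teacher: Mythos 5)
Your proposal is correct and takes essentially the same route as the paper's own proof: evaluate \(\Lo^\times\) at the tooth minima \(\lopt = \lambda_k\), drop the ceilings downward to obtain the geometric sum \(\lambda_0(\rho^k - 1)/(\rho - 1)\), convert \(\rho^k\) back via \(\lambda_0 \rho^k > \lambda_k - 1\), and extend across each tooth by combining (\ref{eq:lossD}) with the monotonicity of \(\Lo^\times_\mathrm{low}\) in \(\lopt\). The only cosmetic difference is that you check \(\lopt = \lambda_0\) as an explicit base case, while the paper absorbs it by noting its pointwise estimate already holds for \(k = 0\).
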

\begin{proof}
	The number of required restarts at \(\lopt = \lambda_{k}\) is \(\kopt(\lambda_k ) = k\). Using \(\lambda_k \geq \lambda_0 \rho^k\) from (\ref{eq:lambdaMod}), then
	\begin{align} \label{eq:loss1}
		\Lo^\times(\lambda_k, \rho) = \sum_{j = 0}^{k} \lambda_j - \lambda_k = \sum_{j = 0}^{k - 1} \lambda_j \geq \sum_{j = 0}^{k - 1} \lambda_0\rho^j = \lambda_0 \frac{\rho^{k} - 1}{\rho - 1}.
	\end{align}
	With (\ref{eq:lambdaMod}) it holds that
	\begin{align}
		\lambda_0 \rho^k + 1 > \lambda_k \Leftrightarrow \rho^k > \frac{\lambda_{k} - 1}{\lambda_0}.
	\end{align}
	Inserting this into (\ref{eq:loss1}), then it follows that
	\begin{align} \label{eq:loss2}
		\Lo^\times(\lambda_k, \rho) &> \lambda_0 \frac{\frac{\lambda_k - 1}{\lambda_0} - 1}{\rho - 1} = \frac{\lambda_k - 1 - \lambda_0}{\rho - 1}.
	\end{align}
for all \(k \geq 0\). In (\ref{eq:lossD}) it was shown that \(\Lo^\times(\lopt, \rho)\) decreases between \(\lambda_{k - 1} + 1\) and \(\lambda_{k}\). Therefore, it follows for \(\lambda_{k - 1}  + 1 \leq \lopt < \lambda_{k}\) by using (\ref{eq:loss2})
	\begin{align}
		\Lo^\times(\lopt, \rho) > \Lo^\times\left(\lambda_{k}, \rho \right)> \frac{\lambda_{k} - 1 - \lambda_0}{\rho - 1} > \frac{\lopt - 1 - \lambda_0}{\rho - 1} =  \Lo^\times_\mathrm{low}(\lopt, \rho),
	\end{align}
	which holds for all \(k \geq 1\). Because (\ref{eq:loss2}) includes the case \(\lambda_0\), it follows for all \(\lopt \geq \lambda_0\) that
	\begin{align}
		\Lo^\times(\lopt, \rho) > \Lo^\times_\mathrm{low}(\lopt, \rho).
	\end{align}
\end{proof}
\noindent
The lower bound (\ref{eq:lossModLow}) is represented by the blue dashed line in Fig. \ref{fig:lossTimes}.

\subsection{The Loss Function for the \(\R^*\)-RS} \label{sec:lossMult}

The loss function of the \(\R^*\)-RS is denoted as \(\Lo^*(\lopt, \rho)\) and represented by the gray markers in Fig.~\ref{fig:lossMult}. In order to derive an upper bound for \(\R^*\), the following theorem is needed.

\begin{figure}
	\centering
	\hspace{-15pt}
	\begin{subfigure}{0.35\textwidth}
		\includegraphics[width=1\linewidth]{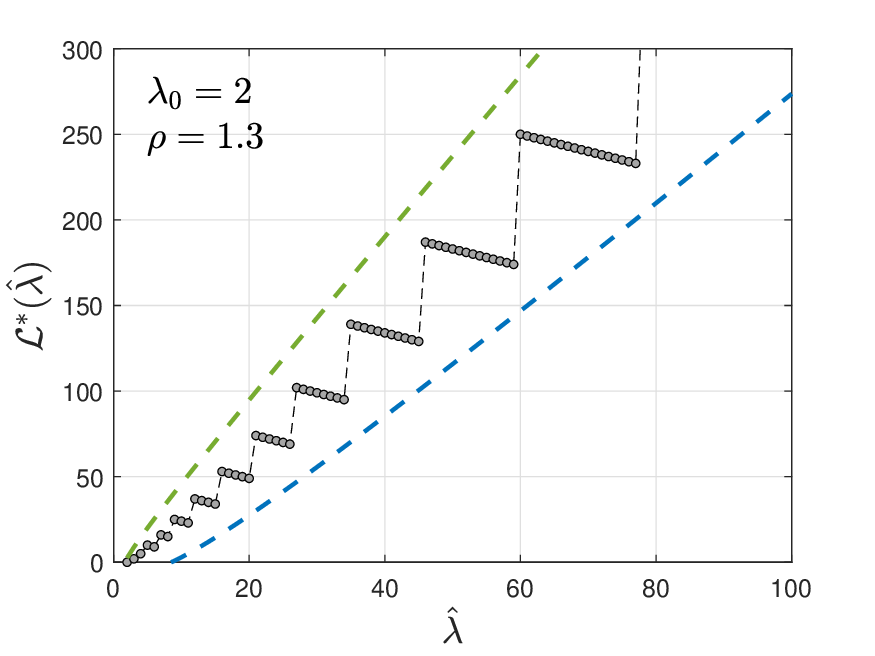}
	\end{subfigure}
	\hspace{-15pt}
	\begin{subfigure}{0.35\textwidth}
		\includegraphics[width=1\linewidth]{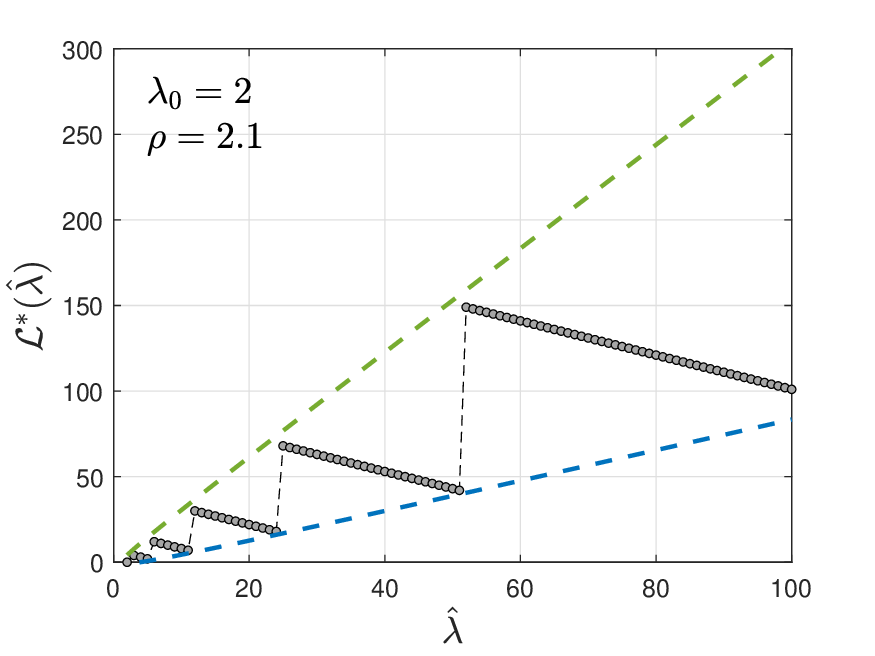}
	\end{subfigure}
	\hspace{-15pt}
	\begin{subfigure}{0.35\textwidth}
		\includegraphics[width=1\linewidth]{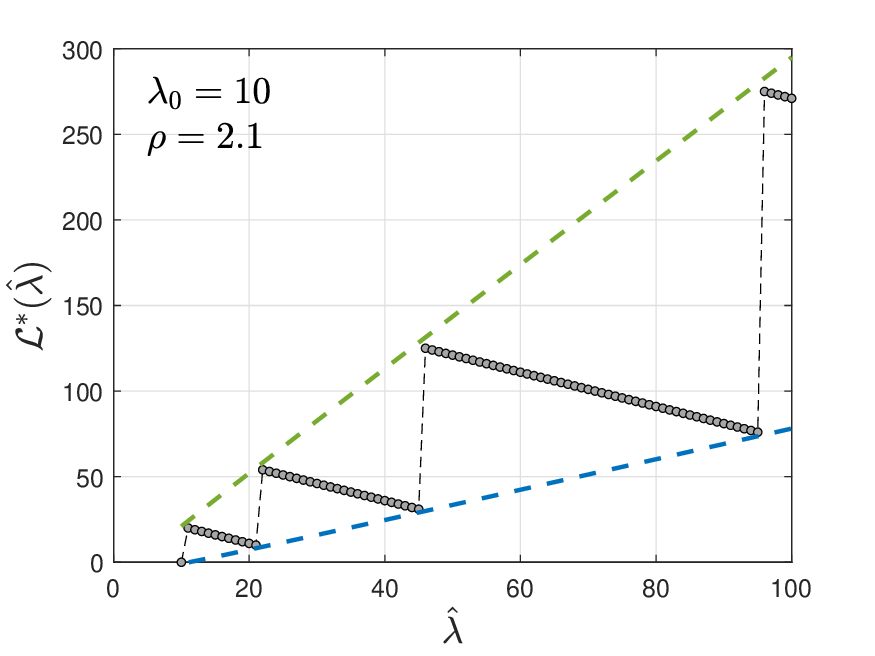}
	\end{subfigure}
	\hspace{-20pt}
	\caption{Loss function (\ref{eq:lossSimple}) for \(\R^*\) depending on \(\lopt\). Gray markers represent the numerical values of the loss function, determined with Alg. \ref{alg:lossNum}. The green dashed line shows the upper bound (\ref{eq:lossUp}) and the blue dashed line shows the lower bound (\ref{eq:lLow}).}
	\label{fig:lossMult}
\end{figure}

\begin{theorem} \label{th:m1} 
	For \(\rho > 1\), \(k \in \mathbb{N}\) and \(\lambda_{k + 1} = \lceil \lambda_{k} \rho \rceil\) let
	\begin{align}
		\mathcal{F}_\mathrm{up}(k) := \lambda_0 \rho + k + \left(\lambda_k - \lambda_0\right) \left(\rho + \frac{1}{\rho - 1}\right),
	\end{align}
	and
	\begin{align}
		\mathcal{F}(k) := \sum_{j = 0}^{k + 1} \lambda_{j} - \lambda_{k} - 1.
	\end{align}
	Then it holds that \(\mathcal{F}_\mathrm{up}(k) > \mathcal{F}(k)\).
\end{theorem}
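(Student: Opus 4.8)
The plan is to first reduce $\mathcal{F}(k)$ to two manageable pieces, then bound each of them, and finally compare against the (slightly slack) expression $\mathcal{F}_\mathrm{up}(k)$. Note that, unlike the $\R^\times$-RS treated just before, the iterated ceiling $\lambda_{k+1} = \lceil \lambda_k \rho \rceil$ does \emph{not} admit the closed form $\lambda_k = \lceil \lambda_0 \rho^k \rceil$, so the term-by-term estimate $\lambda_j < \lambda_0 \rho^j + 1$ used there is unavailable; this is what motivates a telescoping argument instead. First I would split off the two highest terms of the sum so that the isolated $\lambda_k$ cancels:
\[
	\mathcal{F}(k) = \sum_{j=0}^{k-1} \lambda_j + \lambda_k + \lambda_{k+1} - \lambda_k - 1 = \sum_{j=0}^{k-1} \lambda_j + \lambda_{k+1} - 1 .
\]
For the single term, the defining recursion gives directly $\lambda_{k+1} = \lceil \lambda_k \rho \rceil < \lambda_k \rho + 1$, hence $\lambda_{k+1} - 1 < \lambda_k \rho$; this is where the one strict inequality of the whole argument enters.

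The main work — and the step I would flag as the crux — is to bound the partial sum by
\[
	\sum_{j=0}^{k-1} \lambda_j \le \frac{\lambda_k - \lambda_0}{\rho - 1} .
\]
I would obtain this from a per-term estimate followed by a telescoping collapse. Since $\lambda_{j+1} = \lceil \lambda_j \rho \rceil \ge \lambda_j \rho$, we have $\lambda_{j+1} - \lambda_j \ge (\rho - 1)\lambda_j$, and because $\rho - 1 > 0$ this rearranges to $\lambda_j \le (\lambda_{j+1} - \lambda_j)/(\rho - 1)$. Summing over $j = 0, \ldots, k-1$ telescopes the right-hand side to $(\lambda_k - \lambda_0)/(\rho - 1)$, which is exactly the claimed bound (and reduces to the trivial $0 \le 0$ in the case $k = 0$, where both the sum and the right-hand side vanish).

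It then remains only to assemble the pieces and compare with $\mathcal{F}_\mathrm{up}$. Combining the two bounds yields
\[
	\mathcal{F}(k) < \frac{\lambda_k - \lambda_0}{\rho - 1} + \lambda_k \rho ,
\]
while expanding the definition and using $(\lambda_k - \lambda_0)\rho = \lambda_k \rho - \lambda_0 \rho$ shows that the $\lambda_0 \rho$ terms cancel, leaving
\[
	\mathcal{F}_\mathrm{up}(k) = \lambda_k \rho + \frac{\lambda_k - \lambda_0}{\rho - 1} + k .
\]
Since $k \ge 0$, the right-hand side here is at least the upper bound just obtained for $\mathcal{F}(k)$, so $\mathcal{F}(k) < \mathcal{F}_\mathrm{up}(k)$, as required. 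I expect no genuine obstacle beyond the telescoping bound; in fact the slack term $k$ is not even needed for strictness, since the ceiling estimate for $\lambda_{k+1}$ already supplies a strict inequality, so the result holds comfortably for every $k \in \mathbb{N}$.
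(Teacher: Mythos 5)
Your proof is correct, and it takes a genuinely different route from the paper's. The paper proves the statement by induction on \(k\): it verifies the base case \(\mathcal{F}_\mathrm{up}(0) > \mathcal{F}(0)\) and then, in the inductive step, compares the increments \(\mathcal{F}_\mathrm{up}(k+1) - \mathcal{F}_\mathrm{up}(k)\) and \(\mathcal{F}(k+1) - \mathcal{F}(k)\) using the two-sided ceiling estimate \(\lambda_k \rho \leq \lambda_{k+1} < \lambda_k \rho + 1\). You instead argue directly: after reducing \(\mathcal{F}(k) = \sum_{j=0}^{k-1} \lambda_j + \lambda_{k+1} - 1\), you bound the partial sum via the per-term estimate \(\lambda_j \leq (\lambda_{j+1} - \lambda_j)/(\rho - 1)\), which telescopes to \((\lambda_k - \lambda_0)/(\rho - 1)\), and you bound the last term strictly by \(\lambda_{k+1} - 1 < \lambda_k \rho\) (the ceiling bound \(\lceil x \rceil < x + 1\) is strict for every real \(x\), so strictness is secured, and the case \(k = 0\) is covered by the empty sum). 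Since \(\mathcal{F}_\mathrm{up}(k) = \lambda_k \rho + (\lambda_k - \lambda_0)/(\rho - 1) + k\), the claim follows from \(k \geq 0\). Your argument is shorter, avoids induction, and actually proves slightly more, namely \(\mathcal{F}(k) < \mathcal{F}_\mathrm{up}(k) - k\), exposing the additive \(k\) in \(\mathcal{F}_\mathrm{up}\) as pure slack; the paper carries that term because it is later replaced by \(\ln(\lopt/\lambda_0)/\ln(\rho)\) via (\ref{eq:mk}) so that \(\Lo^*_\mathrm{up}\) coincides with \(\Lo^\times_\mathrm{up}\). Your opening observation is also on point: the iterated ceiling satisfies only \(\lambda_k \geq \lambda_0 \rho^k\) and not \(\lambda_k < \lambda_0 \rho^k + 1\), so the term-by-term device used for the \(\R^\times\)-RS is indeed unavailable, and your telescoping bound is a natural substitute for what the paper achieves by induction.
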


\begin{proof} It holds for all \(k \geq 0\) that
	\begin{align} \label{eq:m0}
		\lambda_k \rho + 1 > \lambda_{k + 1} = \lceil \lambda_{k} \rho \rceil \geq \lambda_{k}\rho > \lceil \lambda_{k} \rho \rceil - 1.
	\end{align}
	It follows for \(k = 0\) that
	\begin{align}
		\mathcal{F}_\mathrm{up}(0) = \lambda_0 \rho > \lceil \lambda_{0} \rho \rceil - 1 = \lambda_1 - 1 = \sum_{j = 0}^{1} \lambda_{j} - \lambda_{0} - 1 = \mathcal{F}(0).
	\end{align}
	Assume that the condition \(\mathcal{F}_\mathrm{up}(k) > \mathcal{F}(k)\) holds for \(k\), then it follows by induction
	\begin{align}
		\mathcal{F}_\mathrm{up}(k+1) &= \lambda_0 \rho + k + 1 + \left(\lambda_{k + 1} - \lambda_0\right) \left(\rho + \frac{1}{\rho - 1}\right) \nonumber
		\\
		&= \lambda_0 \rho + k + \left(\lambda_{k} - \lambda_0\right) \left(\rho + \frac{1}{\rho - 1}\right) - \lambda_{k} \left(\rho + \frac{1}{\rho - 1}\right) + \lambda_{k + 1} \left(\rho + \frac{1}{\rho - 1}\right) + 1 \nonumber
		\\
		&= \mathcal{F}_\mathrm{up}(k) + \left(\lambda_{k + 1} - \lambda_{k}\right) \left(\rho + \frac{1}{\rho - 1}\right) + 1\nonumber
		\\
		&> \mathcal{F}(k) + \left(\lambda_{k + 1} - \lambda_{k}\right) \left(\rho + \frac{1}{\rho - 1}\right) + 1 \nonumber\\
		&= \sum_{j = 0}^{k + 1} \lambda_{j} - \lambda_{k} - 1 + \left(\lambda_{k + 1} - \lambda_{k}\right) \left(\rho + \frac{1}{\rho - 1}\right) + 1 \nonumber
		\\
		&= \sum_{j = 0}^{k + 2} \lambda_{j} - \lambda_{k + 1} - 1 - \lambda_{k + 2} + \lambda_{k + 1} + 1 - \lambda_{k} - 1 + \left(\lambda_{k + 1} - \lambda_{k}\right) \left(\rho + \frac{1}{\rho - 1}\right) + 1 \nonumber
		\\
		& = \mathcal{F}(k + 1) - \lambda_{k + 2} + \lambda_{k + 1} - \lambda_{k} + \left(\lambda_{k + 1} - \lambda_{k}\right) \left(\rho + \frac{1}{\rho - 1}\right) + 1 \nonumber
		\\
		& = \mathcal{F}(k + 1) - \lambda_{k + 2} + \lambda_{k + 1} - \lambda_{k} + \lambda_{k + 1} \rho - \lambda_{k} \rho + \frac{\lambda_{k + 1} - \lambda_k}{\rho - 1} + 1 \nonumber
		\\
		& > \mathcal{F}(k + 1) - \lambda_{k + 1} \rho - 1 + \lambda_{k + 1} - \lambda_{k} + \lambda_{k + 1} \rho - \lambda_{k} \rho + \frac{\lambda_{k + 1} - \lambda_k}{\rho - 1}+ 1 \nonumber
		\\
		&\geq \mathcal{F}(k + 1) + \lambda_{k} \rho - \lambda_{k} - \lambda_{k} \rho + \frac{\lambda_{k} \rho - \lambda_{k}}{\rho - 1} \nonumber 
		\\
		&= \mathcal{F}(k + 1),
	\end{align}
	where (\ref{eq:m0}) was used for the last two inequalities.
\end{proof}
\noindent
Using this result it is possible to derive an upper bound for the loss function of the \(\R^*\)-RS.

\begin{corollary}	
	Let \(\Lo^*(\lopt, \rho)\) be the loss function (\ref{eq:lossSimple}) of the \(\R^*\)-RS with restart parameter \(\rho > 1\) and \(\lambda_{k} = \lceil \lambda_{k - 1} \rho \rceil\) for \(k \geq 1\). Let
	\begin{align} \label{eq:lossUp}
		\Lo^*_\mathrm{up}(\lopt, \rho) := \lambda_0 \rho + \frac{\ln\left(\lopt / \lambda_0\right)}{\ln\left(\rho\right)} + \left(\lopt - \lambda_0\right) \left(\rho + \frac{1}{\rho - 1}\right).
	\end{align}
	Then \(\Lo^{*}_\mathrm{up}(\lopt, \rho)\) is an upper bound of \(\Lo^*(\lopt, \rho)\).
\end{corollary}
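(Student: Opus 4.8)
The plan is to mirror the structure of the two preceding theorems, combining the saw tooth behaviour of $\Lo^*$ recorded in (\ref{eq:lossD}) with the inductive estimate of Theorem~\ref{th:m1}. The starting point is the observation that at a peak $\lopt = \lambda_k + 1$ exactly $k+1$ restarts are required, so directly from the definition (\ref{eq:lossSimple}) of the loss function
\begin{align}
	\Lo^*(\lambda_k + 1, \rho) = \sum_{j = 0}^{k+1} \lambda_j - \lambda_k - 1 = \mathcal{F}(k),
\end{align}
which Theorem~\ref{th:m1} bounds from above by $\mathcal{F}_\mathrm{up}(k) = \lambda_0 \rho + k + (\lambda_k - \lambda_0)(\rho + \frac{1}{\rho - 1})$. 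It then remains to show that $\mathcal{F}_\mathrm{up}(k)$ is itself dominated by $\Lo^*_\mathrm{up}(\lambda_k + 1, \rho)$.

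Comparing $\mathcal{F}_\mathrm{up}(k)$ term by term with $\Lo^*_\mathrm{up}(\lambda_k + 1, \rho)$, the linear part is immediate, since $\rho + \frac{1}{\rho - 1} > 0$ for $\rho > 1$ gives $(\lambda_k - \lambda_0)(\rho + \frac{1}{\rho-1}) < (\lambda_k + 1 - \lambda_0)(\rho + \frac{1}{\rho-1})$. The term that needs an argument is the integer restart count $k$, which I would absorb into the logarithmic term by showing $k \le \frac{\ln((\lambda_k + 1)/\lambda_0)}{\ln(\rho)}$. To obtain this I would first establish, by induction on the recursion $\lambda_j = \lceil \lambda_{j-1}\rho\rceil \ge \lambda_{j-1}\rho$, the lower bound $\lambda_k \ge \lambda_0 \rho^k$; then $\rho^k \le \lambda_k/\lambda_0 < (\lambda_k + 1)/\lambda_0$ and taking logarithms (using $\ln\rho > 0$) yields the required inequality. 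Combining both estimates gives, at every peak,
\begin{align}
	\Lo^*(\lambda_k + 1, \rho) = \mathcal{F}(k) < \mathcal{F}_\mathrm{up}(k) < \Lo^*_\mathrm{up}(\lambda_k + 1, \rho),
\end{align}
and the base case is settled directly, since $\Lo^*(\lambda_0, \rho) = 0 < \lambda_0 \rho = \Lo^*_\mathrm{up}(\lambda_0, \rho)$.

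To pass from the peaks to all $\lopt \ge \lambda_0$ I would close with the monotonicity argument used in the earlier proofs. The bound $\Lo^*_\mathrm{up}$ is increasing in $\lopt$, because all three of its summands are nondecreasing (again using $\ln\rho > 0$ and $\rho + \frac{1}{\rho - 1} > 0$), while (\ref{eq:lossD}) shows that on each interval $\lambda_{k-1} + 1 \le \lopt < \lambda_k$ the loss $\Lo^*$ is decreasing, so its maximum there is attained at the left peak $\lambda_{k-1} + 1$. Hence for such $\lopt$ one chains $\Lo^*(\lopt) \le \Lo^*(\lambda_{k-1} + 1) < \Lo^*_\mathrm{up}(\lambda_{k-1} + 1) \le \Lo^*_\mathrm{up}(\lopt)$. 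I expect the main obstacle to be the middle paragraph: unlike the $\R^\times$ case, where $\lambda_k = \lceil \lambda_0 \rho^k\rceil$ supplies the comparison $\lambda_k \ge \lambda_0\rho^k$ at once, here that lower bound must be extracted from the nested recursion $\lambda_k = \lceil \lambda_{k-1}\rho\rceil$ by induction before the restart count $k$ can be replaced by the logarithmic term; once this is in place the remainder is routine bookkeeping parallel to the $\R^\times$ argument.
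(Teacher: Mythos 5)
Your proposal is correct and takes essentially the same approach as the paper's proof: both rest on Theorem~\ref{th:m1} as the key lemma, the bound \(\lambda_k \geq \lambda_0 \rho^k\) (obtained from \(\lceil \lambda_{j-1}\rho\rceil \geq \lambda_{j-1}\rho\)) to replace the restart count \(k\) by the logarithmic term, and the saw-tooth monotonicity (\ref{eq:lossD}) plus the monotonicity of \(\Lo^*_\mathrm{up}\) in \(\lopt\). The only difference is organizational --- you verify the inequality at the peaks \(\lopt = \lambda_k + 1\) and then extend by monotonicity, whereas the paper chains the same inequalities directly for arbitrary \(\lopt\) in each interval --- which does not change the substance of the argument.
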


\begin{proof} Because \(\lambda_k = \lceil \lambda_{k - 1} \rho \rceil = \lceil \lceil\lambda_0 \rho \rceil \ldots \rho \rceil \geq \lambda_0 \rho^k\) it follows that
	\begin{align} \label{eq:mk}
		\frac{\lambda_k}{\lambda_0} \geq \rho^k \Leftrightarrow \frac{\ln\left(\lambda_k / \lambda_0\right)}{\ln\left(\rho\right)} \geq k.
	\end{align}
	By using (\ref{eq:mk}) and Theorem \ref{th:m1} it follows for \(\lambda_{k} + 1 \leq \lopt \leq \lambda_{k + 1}\) and for all \(k \geq 0\) that
	\begin{align} \label{eq:m2}
		\Lo^*_\mathrm{up}(\lopt, \rho) &= \lambda_0 \rho + \frac{\ln\left(\lopt / \lambda_0\right)}{\ln\left(\rho\right)} + \left(\lopt - \lambda_0\right) \left(\rho + \frac{1}{\rho - 1}\right) \nonumber
		\\
		&> \lambda_0 \rho + \frac{\ln\left(\lambda_k / \lambda_0\right)}{\ln\left(\rho\right)} + \left(\lambda_k - \lambda_0\right) \left(\rho + \frac{1}{\rho - 1}\right) \nonumber
		\\
		&\geq \lambda_0 \rho + k + \left(\lambda_k - \lambda_0 \right) \left(\rho + \frac{1}{\rho - 1}\right) = \mathcal{F}_\mathrm{up}(k) \nonumber
		\\
		&> \mathcal{F}(k) = \Lo^*(\lambda_{k} + 1, \rho) \geq \Lo^*(\lopt, \rho).
	\end{align}
The last inequality follows from (\ref{eq:lossD}), i.e., \(\Lo^*(\lopt, \rho)\) decreases between \(\lambda_{k} + 1\) and \(\lambda_{k + 1}\). Because (\ref{eq:m2}) holds for all \(k \geq 0\) and
	\begin{align}
		\Lo^*_\mathrm{up}(\lambda_0, \rho) = \lambda_0 \rho > 0 = \Lo^*(\lambda_0, \rho)
	\end{align}
	it follows for all \(\lopt \geq \lambda_0\) that
	\begin{align}
		\Lo^*_\mathrm{up}(\lopt, \rho) > \Lo^*(\lopt, \rho).
	\end{align}
\end{proof}
The upper bound (\ref{eq:lossUp}) is represented in Fig. \ref{fig:lossMult} by the green dashed line. In the left figure, where \(\lambda_0\) and \(\rho\) are small, the differences between (\ref{eq:lossUp}) and the upper corners of the loss function are clearly visible. In the right figure for larger values of \(\lambda_0\) and \(\rho\) the upper bound (\ref{eq:lossUp}) is only slightly larger than the upper corners of the loss function.

In Fig. \ref{fig:lossComp}, the \(\R^*\)-RS is compared with the \(\R^\times\)-RS. The black markers represent the loss of an \(\R^*\)-RS, while the gray markers represent the loss of an \(\R^\times\)-RS. The figure illustrates that the loss functions of the two strategy types are nested within one another. By transforming the upper bound of the \(\R^*\)-RS (\ref{eq:lossUp}), the upper bound of the \(\R^\times\)-RS (\ref{eq:lossModUp}) is obtained, i.e., it holds that \(\Lo_\mathrm{up}^*(\lopt, \rho) = \Lo_\mathrm{up}^\times(\lopt, \rho)\). The upper bound is illustrated by the green line in Fig. \ref{fig:lossComp}.

The next theorem provides a preparatory step for the calculation of a lower bound of \(\R^*\).

\begin{figure}
	\centering
	\begin{subfigure}{0.45\textwidth}
		\includegraphics[width=1\linewidth]{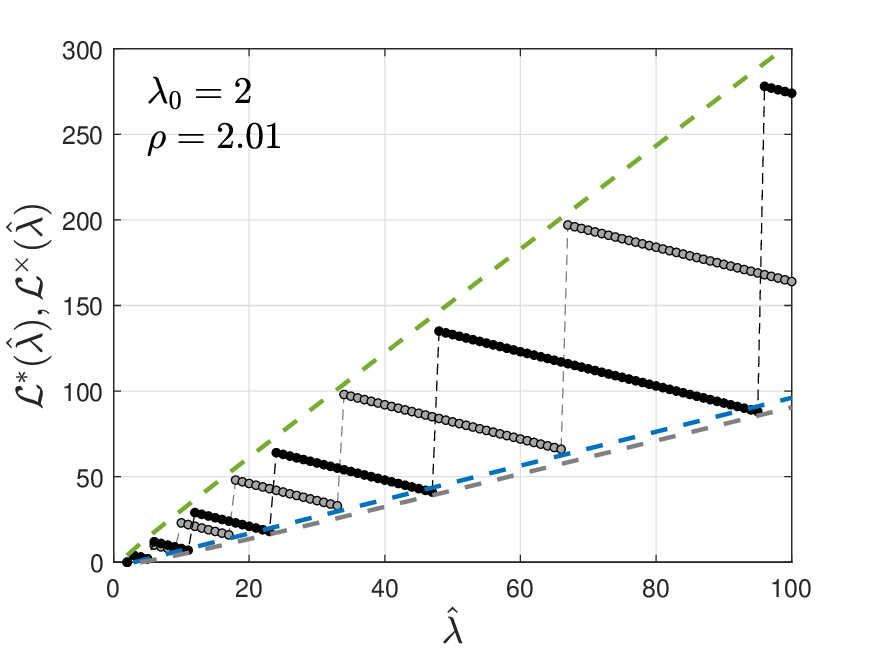}
	\end{subfigure}
	\begin{subfigure}{0.45\textwidth}
		\includegraphics[width=1\linewidth]{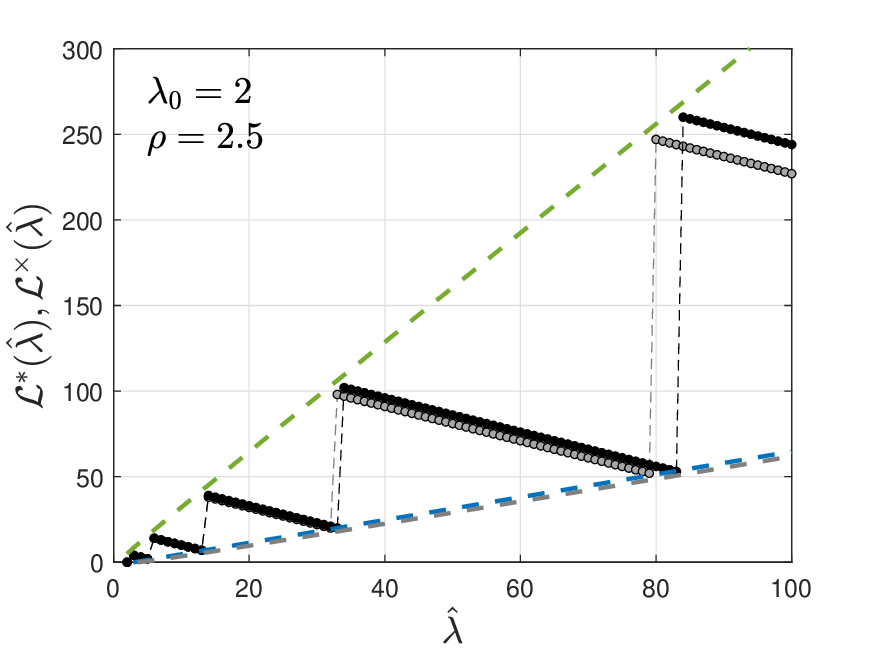}
	\end{subfigure}
	\caption{Loss function (\ref{eq:lossSimple}) depending on \(\lopt\). Markers represent the numerical values of the loss function, determined with Alg. \ref{alg:lossNum}, black for the \(\R^*\)-RS and gray for the \(R^\times\)-RS. The green line in the middle and right plot shows the upper bound (\ref{eq:lossUp}). The gray dashed line shows the lower bound (\ref{eq:lLow}) of the \(\R^*\)-RS and the blue dashed line shows the lower bound (\ref{eq:lossModLow}) of the \(\R^\times\)-RS.}
	\label{fig:lossComp}
\end{figure}

\begin{theorem} \label{th:m2}
	For \(\rho > 1\), \(k \in \mathbb{N}\setminus\{0\}\) and \(\lambda_{k + 1} = \lceil \lambda_{k} \rho \rceil\) let
	\begin{align}
		\mathcal{F}_\mathrm{low}(k) := \frac{1}{\rho - 1} \left(\lambda_k - \lambda_0 - k\right),
	\end{align}
	and
	\begin{align}
		\mathcal{F}(k) := \sum_{j = 0}^{k} \lambda_{j} - \lambda_{k}.
	\end{align}
	Then it holds that \(\mathcal{F}_\mathrm{low}(k) < \mathcal{F}(k).\)
\end{theorem}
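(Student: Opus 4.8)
The plan is to bypass the induction used in Theorem \ref{th:m1} and instead give a direct telescoping estimate, which is cleaner for this lower bound. First I would simplify the target quantity by cancelling the last term: since
\begin{align}
	\mathcal{F}(k) = \sum_{j = 0}^{k} \lambda_j - \lambda_k = \sum_{j = 0}^{k - 1} \lambda_j,
\end{align}
the claim reduces to a lower bound on the partial sum \(\sum_{j=0}^{k-1}\lambda_j\) expressed through \(\lambda_k\) rather than through \(\lambda_0\) and \(\rho\) (which is what a naive \(\lambda_j \geq \lambda_0\rho^j\) estimate would give).

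The key observation is the ceiling inequality \(\lambda_{j+1} = \lceil \lambda_j \rho \rceil < \lambda_j \rho + 1\), which holds for every real argument. I would rearrange this into a per-term lower bound on \(\lambda_j\): subtracting \(\lambda_j + 1\) from both sides gives \(\lambda_{j+1} - \lambda_j - 1 < \lambda_j(\rho - 1)\), and dividing by \(\rho - 1 > 0\) yields
\begin{align}
	\lambda_j > \frac{\lambda_{j+1} - \lambda_j - 1}{\rho - 1}, \quad j \geq 0.
\end{align}
The point of this rearrangement is that the numerator is now a telescoping difference plus a constant, exactly matching the shape of \(\mathcal{F}_\mathrm{low}\).

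Summing this inequality over \(j = 0, \ldots, k-1\) and telescoping gives
\begin{align}
	\sum_{j = 0}^{k - 1} \lambda_j > \frac{1}{\rho - 1} \sum_{j = 0}^{k - 1} \left(\lambda_{j+1} - \lambda_j - 1\right) = \frac{1}{\rho - 1}\left(\lambda_k - \lambda_0 - k\right) = \mathcal{F}_\mathrm{low}(k),
\end{align}
using \(\sum_{j=0}^{k-1}(\lambda_{j+1}-\lambda_j) = \lambda_k - \lambda_0\) and \(\sum_{j=0}^{k-1} 1 = k\). Combined with the reduction of the first paragraph, this is precisely \(\mathcal{F}_\mathrm{low}(k) < \mathcal{F}(k)\).

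There is no serious obstacle here; the only point requiring care is the strictness of the final inequality. Because the per-term bound is strict and the hypothesis \(k \in \mathbb{N}\setminus\{0\}\) guarantees that the sum contains at least one term, the strict \(<\) survives the summation — this is exactly where \(k \geq 1\) is used. I would also double-check that the ceiling bound \(\lceil x\rceil < x+1\) is genuinely strict (it is, since \(\lceil x\rceil - 1 < x\) always), so that no boundary case with equality can arise. As a consistency check, one could instead mirror the induction of Theorem \ref{th:m1}: the base case \(k=1\) reduces to \(\lambda_1 < \lambda_0\rho + 1\), and the inductive step reduces, after using \(\mathcal{F}(k+1) = \mathcal{F}(k) + \lambda_k\), to the same bound \(\lambda_{k+1} - 1 \leq \lambda_k\rho\); both routes therefore rest on the identical ceiling estimate.
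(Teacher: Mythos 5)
Your proof is correct, and it takes a genuinely cleaner route than the paper's. The paper argues by induction on \(k\): a base case \(\mathcal{F}_\mathrm{low}(1) < \lambda_0 = \mathcal{F}(1)\), followed by an inductive step that compares the increments \(\mathcal{F}_\mathrm{low}(k+1) - \mathcal{F}_\mathrm{low}(k) = \frac{1}{\rho - 1}\left(\lambda_{k+1} - \lambda_k - 1\right)\) and \(\mathcal{F}(k+1) - \mathcal{F}(k) = \lambda_k\), closing the gap with the ceiling estimate \(\lambda_{k+1} = \lceil \lambda_k \rho \rceil < \lambda_k \rho + 1\). Your telescoping argument is exactly this induction unrolled: your per-term bound \(\lambda_j > \frac{\lambda_{j+1} - \lambda_j - 1}{\rho - 1}\) \emph{is} the paper's increment comparison, and summing it over \(j = 0, \ldots, k-1\) replaces base case and inductive step in one line. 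Both proofs thus rest on the identical key inequality (as you note yourself), but your organization buys brevity and makes the role of the hypotheses transparent: strictness survives because every summand is strictly bounded, and \(k \geq 1\) guarantees the sum is nonempty — a point the paper handles only implicitly through its base case. What the paper's induction buys in exchange is structural parallelism with the proof of Theorem~\ref{th:m1}, where the analogous telescoping shortcut would be less clean because the upper-bound increments carry the factor \(\left(\rho + \frac{1}{\rho - 1}\right)\) and require two separate applications of the ceiling bounds. One cosmetic remark: in your closing consistency check you write \(\lambda_{k+1} - 1 \leq \lambda_k \rho\); the strict form \(\lambda_{k+1} - 1 < \lambda_k \rho\) actually holds (since \(\lceil x \rceil - 1 < x\) for all real \(x\)), though the weak form already suffices at that point because the inductive hypothesis supplies the strict inequality.
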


\begin{proof}
	It holds for all \(k \geq 0\) that
	\begin{align} \label{eq:m1}
		\lambda_{k + 1} = \lceil \lambda_{k} \rho \rceil < \lambda_{k}\rho + 1.
	\end{align}
	It follows for \(k = 1\)
	\begin{align}
		\mathcal{F}_\mathrm{low}(1) = \frac{1}{\rho - 1} \left(\lambda_1 - \lambda_0 - 1\right) < \frac{1}{\rho - 1} \left(\lambda_0 \rho - \lambda_0\right) = \lambda_0 = \sum_{j = 0}^{1} \lambda_{j} - \lambda_{1} = \mathcal{F}(1).
	\end{align}
	Assume that the condition \(\mathcal{F}_\mathrm{low}(k) < \mathcal{F}(k)\) holds for \(k\), then it follows by induction that
	\begin{align}
		\mathcal{F}_\mathrm{low}(k + 1) &= \frac{1}{\rho - 1} \left(\lambda_{k + 1} - \lambda_0 - k - 1\right) \nonumber
		\\
		&=  \frac{1}{\rho - 1} \left(\lambda_{k} - \lambda_0 - k\right) + \frac{1}{\rho - 1} \left(\lambda_{k + 1} - \lambda_{k} - 1\right) \nonumber
		\\
		&=
		\mathcal{F}_\mathrm{low}(k) + \frac{1}{\rho - 1} \left(\lambda_{k + 1} - \lambda_{k} - 1\right) \nonumber
		\\
		&< \mathcal{F}(k) + \frac{1}{\rho - 1} \left(\lambda_{k + 1} - \lambda_{k} - 1\right) \nonumber
		\\
		&= \sum_{j = 0}^{k} \lambda_{j} - \lambda_{k} + \frac{1}{\rho - 1} \left(\lambda_{k + 1} - \lambda_{k} - 1\right)\nonumber
		\\
		&=\sum_{j = 0}^{k + 1} \lambda_{j} - \lambda_{k + 1} - \lambda_{k } + \frac{1}{\rho - 1} \left(\lambda_{k + 1} - \lambda_{k} - 1\right) \nonumber
		\\
		&=  \mathcal{F}(k + 1) - \lambda_{k} + \frac{1}{\rho - 1} \left(\lambda_{k + 1} - \lambda_{k} - 1\right) \nonumber
		\\
		&< \mathcal{F}(k + 1) - \lambda_{k} + \frac{1}{\rho - 1} \left(\lambda_{k}\rho + 1 - \lambda_{k} - 1\right) = \mathcal{F}(k + 1),
	\end{align}
	where (\ref{eq:m1}) was used for the last inequality.
\end{proof}
\noindent
Using Theorem \ref{th:m2}, the lower bound of \(\R^*\) is given below.

\begin{corollary}
	Let \(\Lo^*(\lopt, \rho)\) be the loss function (\ref{eq:lossSimple}) of the \(\R^*\)-RS with restart parameter \(\rho > 1\) and \(\lambda_{k} = \lceil \lambda_{k - 1} \rho \rceil\) for \(k \geq 1\). Let
	\begin{align} \label{eq:lLow}
		\Lo^*_\mathrm{low}(\lopt, \rho) := \frac{1}{\rho - 1} \left(\lopt - \lambda_{0} - \frac{\ln\left(\lopt / \lambda_0\right)}{\ln\left(\rho\right)} - 1\right).
	\end{align}
	Then \(\Lo^{*}_\mathrm{low}(\lopt, \rho)\) is a lower bound of \(\Lo^*(\lopt, \rho)\).
\end{corollary}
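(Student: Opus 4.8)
The plan is to mirror the structure of the proof of the preceding upper-bound corollary, now invoking Theorem~\ref{th:m2} in place of Theorem~\ref{th:m1} and exploiting the saw-tooth monotonicity~(\ref{eq:lossD}) from the low side rather than the high side. First I would dispose of the boundary case $\lopt = \lambda_0$ directly: there $\Lo^*(\lambda_0,\rho) = 0$, whereas $\Lo^*_\mathrm{low}(\lambda_0,\rho) = -\tfrac{1}{\rho-1} < 0$, so the claim holds trivially. For the remaining values $\lopt \ge \lambda_0 + 1$ I would set $k := \kopt(\lopt) \ge 1$, so that $\lambda_{k-1}+1 \le \lopt \le \lambda_k$, and note that $\mathcal{F}(k) = \sum_{j=0}^{k}\lambda_j - \lambda_k = \Lo^*(\lambda_k,\rho)$, since exactly $k$ restarts are used at $\lopt=\lambda_k$.

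The core chain of inequalities then reads as follows. Since by~(\ref{eq:lossD}) the loss decreases on $[\lambda_{k-1}+1,\lambda_k]$, its minimum over this interval is attained at the right corner, giving $\Lo^*(\lopt,\rho) \ge \Lo^*(\lambda_k,\rho) = \mathcal{F}(k)$. Theorem~\ref{th:m2} supplies $\mathcal{F}(k) > \mathcal{F}_\mathrm{low}(k) = \tfrac{1}{\rho-1}(\lambda_k - \lambda_0 - k)$. It remains to show that this last quantity dominates $\Lo^*_\mathrm{low}(\lopt,\rho)$, and here two successive replacements do the work: using $\lambda_k \ge \lopt$ I would replace $\lambda_k$ by $\lopt$, and using the logarithmic estimate below I would replace $-k$ by $-\ln(\lopt/\lambda_0)/\ln\rho - 1$, both of which only decrease the expression.

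The decisive step is the estimate $\ln(\lopt/\lambda_0)/\ln\rho > k-1$. I would obtain it from $\lopt \ge \lambda_{k-1}+1 > \lambda_{k-1}$ together with~(\ref{eq:mk}) applied at index $k-1$, namely $\lambda_{k-1} \ge \lambda_0\rho^{k-1}$, which gives $\ln(\lambda_{k-1}/\lambda_0)/\ln\rho \ge k-1$ and hence, by strict monotonicity of the logarithm, $\ln(\lopt/\lambda_0)/\ln\rho > k-1$; equivalently $-k > -\ln(\lopt/\lambda_0)/\ln\rho - 1$. Combining the two replacements yields $\lambda_k - \lambda_0 - k \ge \lopt - \lambda_0 - k > \lopt - \lambda_0 - \ln(\lopt/\lambda_0)/\ln\rho - 1$, and dividing by $\rho-1>0$ shows $\mathcal{F}_\mathrm{low}(k) > \Lo^*_\mathrm{low}(\lopt,\rho)$; chaining everything gives $\Lo^*(\lopt,\rho) > \Lo^*_\mathrm{low}(\lopt,\rho)$, as required.

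I expect the main obstacle to be identifying the correct side and index of the logarithmic bound: one must use~(\ref{eq:mk}) at index $k-1$ rather than $k$, because at the left edge $\lopt = \lambda_{k-1}+1$ the quantity $\ln(\lopt/\lambda_0)/\ln\rho$ sits just above $k-1$ and need not reach $k$. This is precisely why the extra $-1$ appears in~(\ref{eq:lLow}): it absorbs the unavoidable unit gap between $k$ and $\ln(\lopt/\lambda_0)/\ln\rho$, so that a bound without it would fail near the left edges of the saw-tooth. Everything else is routine bookkeeping analogous to the upper-bound corollary.
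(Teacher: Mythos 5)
Your proposal is correct and follows essentially the same route as the paper's proof: both handle $\lopt = \lambda_0$ separately, use the saw-tooth monotonicity~(\ref{eq:lossD}) to reduce to the right corner $\Lo^*(\lambda_k,\rho) = \mathcal{F}(k)$, invoke Theorem~\ref{th:m2} to pass to $\mathcal{F}_\mathrm{low}(k)$, and apply~(\ref{eq:mk}) at index $k-1$ to trade $k$ for $\ln(\lopt/\lambda_0)/\ln\rho + 1$. The only difference is cosmetic — you run the inequality chain downward from $\Lo^*(\lopt,\rho)$ while the paper runs it upward from $\Lo^*_\mathrm{low}(\lopt,\rho)$ — and your closing remark about the role of the $-1$ correctly explains why the paper's bound is stated as it is.
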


\begin{proof}
	It holds for \(\lambda_{k - 1} + 1 \leq \lopt \leq \lambda_k\) and all \(k \geq 1\) that
	\begin{align}
		\Lo^*_\mathrm{low}(\lopt, \rho) &= \frac{1}{\rho - 1} \left(\lopt - \lambda_{0} - \frac{\ln(\lopt / \lambda_0)}{\ln\left(\rho\right)} - 1\right)
		< \frac{1}{\rho - 1} \left(\lambda_k - \lambda_{0} - \frac{\ln\left(\lambda_{k - 1} / \lambda_0\right)}{\ln\left(\rho\right)} - 1\right).
	\end{align}
	From (\ref{eq:mk}) it follows that \(k - 1 \leq \ln\left(\lambda_{k - 1} / \lambda_0\right) / \ln\left(\rho\right)\) for all \(k \geq 1\). Using this and Theorem \ref{th:m2}, one gets for \(\lambda_{k - 1} + 1 \leq \lopt \leq \lambda_k\) and \(k \geq 1\) that
	\begin{align}
		\Lo^*_\mathrm{low}(\lopt, \rho) < \frac{1}{\rho - 1} \left(\lambda_k - \lambda_{0} - (k - 1) - 1\right) = \mathcal{F}_\mathrm{low}(k) < \mathcal{F}(k) = \Lo^*(\lambda_{k}, \rho) \leq \Lo^*(\lopt, \rho).
	\end{align}
	The last inequality follows from (\ref{eq:lossD}), i.e., \(\Lo^*(\lopt, \rho)\) decreases between \(\lambda_{k - 1} + 1\) and \(\lambda_{k}\). For \(\lopt = \lambda_0\) it holds that \(\Lo^*_\mathrm{low}(\lambda_0, \rho) = - \frac{1}{\rho - 1} < 0 = \Lo^*(\lambda_0, \rho)\) and therefore it holds that
	\begin{align}
		\Lo^*_\mathrm{low}(\lopt, \rho) < \Lo^*(\lopt, \rho)
	\end{align}
	for all \(\lopt \geq \lambda_0\).
\end{proof} 
The lower bound (\ref{eq:lLow}) is represented in Fig. \ref{fig:lossMult} by the blue dashed line. In the left figure, where \(\lambda_0\) and \(\rho\) are small, there are clear discrepancies between (\ref{eq:lLow}) and the lower corners of the loss function. In the right figure for larger values of \(\lambda_0\) and \(\rho\), the lower bound (\ref{eq:lLow}) is only slightly smaller than the lower corners of the loss function.

In Fig. \ref{fig:lossComp} the lower bound (\ref{eq:lLow}) of the \(\R^*\)-RS is compared with the lower bound (\ref{eq:lossModLow}) of the \(\R^\times\)-RS. The lower bound (\ref{eq:lLow}) of \(\Lo^*\) is slightly smaller than the lower bound (\ref{eq:lossModLow}) of \(\Lo^\times\) and is a lower bound for both loss functions. The lower bound (\ref{eq:lossModLow}) of \(\Lo^\times\) intersects \(\Lo^*\).

\subsection{The Loss Function for the \(\R^{\#}\)-RS}
The loss function of the \(\R^{\#}\)-RS is denoted as \(\Lo^\#(\lopt, \alpha)\) and represented by the gray markers in Fig. \ref{fig:lossPow}.

\begin{figure}
	\centering
	\hspace{-15pt}
	\begin{subfigure}{0.35\textwidth}
		\includegraphics[width=1\linewidth]{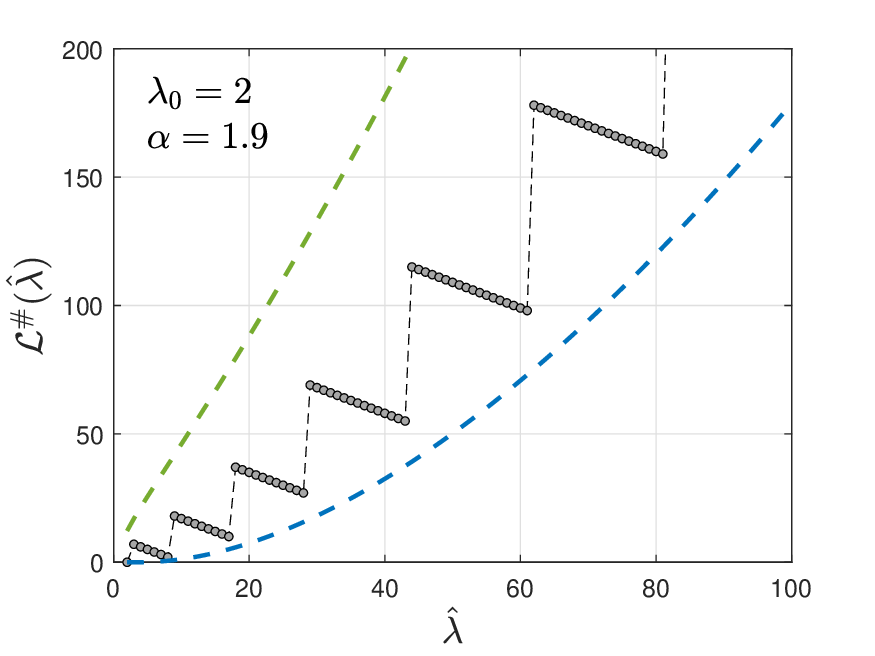}
	\end{subfigure}
	\hspace{-15pt}
	\begin{subfigure}{0.35\textwidth}
		\includegraphics[width=1\linewidth]{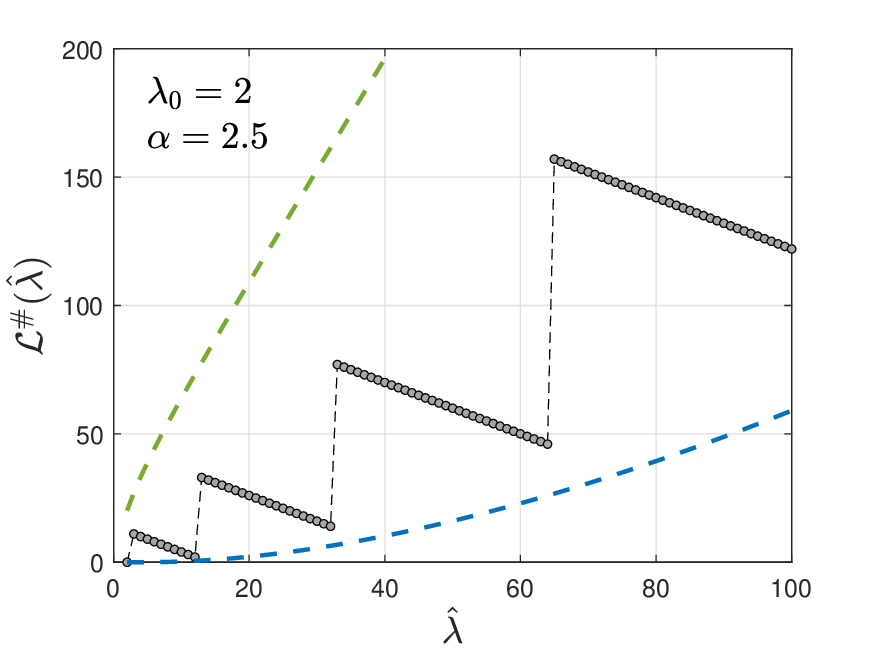}
	\end{subfigure}
	\hspace{-15pt}
	\begin{subfigure}{0.35\textwidth}
		\includegraphics[width=1\linewidth]{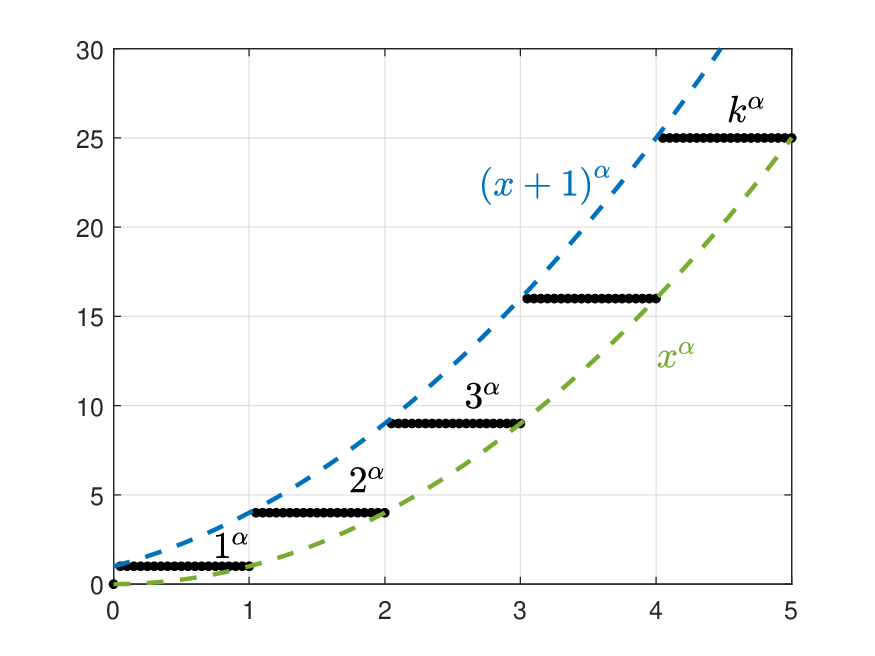}
	\end{subfigure}
	\hspace{-20pt}
	\caption{Left and middle plot: loss function (\ref{eq:lossSimple}) for \(\R^\#\) depending on \(\lopt\). Gray markers represent the numerical values of the loss function, determined with Alg. \ref{alg:lossNum}. The green dashed line shows the upper bound (\ref{eq:lossPowUp}) and the blue dashed line shows the lower bound (\ref{eq:lossPowLow}). Right plot: visualization for Eqs. (\ref{eq:intUp}) and (\ref{eq:intLow}).}
	\label{fig:lossPow}
\end{figure}

\begin{theorem} 
	Let \(\Lo^\#(\lopt, \alpha)\) be the loss function (\ref{eq:lossSimple}) of the \(\R^\#\)-RS with restart parameter \(\alpha \geq 1\) and \(\lambda_{k} = \lceil \lambda_0 (k + 1)^\alpha \rceil\). Let
	\begin{align} \label{eq:lossPowUp}
		\Lo^{\#}_\mathrm{up}\left(\lopt, \alpha\right) := \frac{\lambda_0}{\alpha + 1} \left(\sqrt[\alpha]{\frac{\lopt - 1}{\lambda_0}} + 2\right)^{\alpha + 1} + \sqrt[\alpha]{\frac{\lopt - 1}{\lambda_0}} - \frac{\lambda_{0}}{\alpha + 1}.
	\end{align}
	Then \(\Lo^{\#}_\mathrm{up}(\lopt, \alpha)\) is an upper bound of \(\Lo^\#(\lopt, \alpha)\).
\end{theorem}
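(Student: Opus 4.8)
The plan is to reuse the template that worked for the other strategy types: evaluate the loss exactly at the upper corner $\lopt = \lambda_k + 1$ (where $\kopt(\lambda_k+1)=k+1$), bound this corner value by $\Lo^\#_\mathrm{up}$, and then extend the inequality to all $\lopt$ by combining the monotonicity of $\Lo^\#_\mathrm{up}$ with the fact, established in (\ref{eq:lossD}), that $\Lo^\#$ is decreasing on each interval $\lambda_k + 1 \le \lopt < \lambda_{k+1}$. The novelty here, compared with the arithmetic ($\R^+$) and geometric ($\R^\times$) cases, is that the partial sums $\sum \lambda_j$ no longer have an elementary closed form, so the power sum $\sum i^\alpha$ must be controlled by an integral.

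First I would write, using $\lambda_j = \lceil \lambda_0(j+1)^\alpha\rceil < \lambda_0(j+1)^\alpha + 1$ on each term,
\begin{align}
	\Lo^\#(\lambda_k + 1, \alpha) = \sum_{j=0}^{k+1}\lambda_j - \lambda_k - 1 = \sum_{j=0}^{k-1}\lambda_j + \lambda_{k+1} - 1 < \lambda_0\sum_{i=1}^{k} i^\alpha + k + \lambda_0(k+2)^\alpha.
\end{align}
The key regrouping step is to absorb the isolated top term $\lambda_{k+1}$ back into the sum: since $(k+1)^\alpha > 0$ one has $\sum_{i=1}^{k} i^\alpha + (k+2)^\alpha < \sum_{i=1}^{k+2}i^\alpha$, so that
\begin{align}
	\Lo^\#(\lambda_k+1, \alpha) < \lambda_0 \sum_{i=1}^{k+2} i^\alpha + k.
\end{align}

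Next I would replace the power sum by an integral. Because $x^\alpha$ is increasing for $\alpha \ge 1$, each term obeys $i^\alpha \le \int_i^{i+1} x^\alpha\,dx$, whence
\begin{align}
	\sum_{i=1}^{k+2} i^\alpha \le \int_1^{k+3} x^\alpha\,dx = \frac{(k+3)^{\alpha+1} - 1}{\alpha+1},
\end{align}
which is precisely the estimate illustrated in the right plot of Fig.~\ref{fig:lossPow}. It then remains to eliminate $k$ in favour of $\lopt$: from $\lambda_0(k+1)^\alpha \le \lambda_k = \lopt - 1$ I obtain $k + 1 \le \sqrt[\alpha]{(\lopt-1)/\lambda_0}$, hence $k+3 \le \sqrt[\alpha]{(\lopt-1)/\lambda_0} + 2$ and $k < \sqrt[\alpha]{(\lopt-1)/\lambda_0}$. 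Substituting these two inequalities into the bound and multiplying out $\lambda_0\,[(k+3)^{\alpha+1}-1]/(\alpha+1)$ reproduces exactly the three terms of $\Lo^\#_\mathrm{up}(\lambda_k+1,\alpha)$ in (\ref{eq:lossPowUp}).

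Finally I would close the argument as before. For the base case $\lopt=\lambda_0$ one has $\Lo^\#(\lambda_0,\alpha)=0$, while $\Lo^\#_\mathrm{up}(\lambda_0,\alpha)>0$ since its leading term is at least $\frac{\lambda_0}{\alpha+1}2^{\alpha+1}$, which dominates the subtracted $\frac{\lambda_0}{\alpha+1}$. As $\sqrt[\alpha]{(\lopt-1)/\lambda_0}$ is increasing in $\lopt$, the bound $\Lo^\#_\mathrm{up}$ is increasing, so the corner estimate together with the decreasing behaviour of $\Lo^\#$ from (\ref{eq:lossD}) propagates to every $\lopt \ge \lambda_0$. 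I expect the regrouping and the integral comparison to be the main obstacle: one must track the ceiling slack carefully so that the isolated $\lambda_{k+1}$ term genuinely fits under $\sum_{i=1}^{k+2}i^\alpha$, and the integral must be taken over the range $[1,k+3]$ (rather than $[0,k+2]$ or $[1,k+2]$) so that the closed form lands on (\ref{eq:lossPowUp}) exactly and not merely up to an additive constant.
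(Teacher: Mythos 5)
Your proposal is correct and follows essentially the same route as the paper's proof: you evaluate the loss at the upper corners \(\lopt = \lambda_k + 1\), bound the ceilings by \(\lambda_0(j+1)^\alpha + 1\), apply the integral comparison (your \(\int_1^{k+3} x^\alpha\,\mathrm{d}x\) is exactly (\ref{eq:intUp}) after the substitution \(u = x+1\), so this is not a different choice of range but the paper's own estimate), eliminate \(k\) via \(\lambda_0(k+1)^\alpha \leq \lambda_k\), and extend to all \(\lopt \geq \lambda_0\) by the monotonicity of (\ref{eq:lossPowUp}) combined with (\ref{eq:lossD}). The only, purely cosmetic, difference is your regrouping step: you absorb the isolated term \(\lambda_{k+1}\) into the power sum through the unused \((k+1)^\alpha\) slot, whereas the paper carries \(-\lambda_0(k+1)^\alpha\) explicitly, converts it to \(-(\lambda_k - 1)\) via (\ref{eq:powk}), and finally discards \(1 - \lambda_k \leq 0\) --- your variant lands on the same bound slightly more directly and without needing (\ref{eq:powk}) at all.
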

\begin{proof}
	The loss function jumps at \(\lambda_{k} + 1\) and the number of restarts is \(\kopt(\lambda_k + 1) = k + 1\). It holds that
	\begin{align} \label{eq:pow0}
		\lambda_0 (k + 1)^\alpha \leq \lceil \lambda_0 (k + 1)^\alpha \rceil = \lambda_k < \lambda_0(k + 1)^\alpha + 1.
	\end{align}
	For the calculations to come the following estimation comes in handy
	\begin{align} \label{eq:intUp}
		\sum_{j = 1}^{k} j^\alpha < \int_{0}^{k}(x +1)^\alpha \mathrm{d}x = \frac{(k + 1)^{\alpha + 1}}{\alpha + 1} - \frac{1}{\alpha + 1}
	\end{align} 
	as visualized in the right plot of Fig. \ref{fig:lossPow}. Therefore, it holds for all \(k \geq 0\) that
	\begin{align} \label{eq:lo0}
		\Lo^\#(\lambda_k + 1, \alpha) &= \sum_{j = 0}^{k + 1}\lambda_j - \lambda_k - 1 = \sum_{j = 0}^{k + 1}\lceil \lambda_0 (j + 1)^\alpha\rceil - \lambda_k - 1 \nonumber
		\\
		&<  \sum_{j = 0}^{k + 1}\left(\lambda_0 (j + 1)^\alpha + 1\right) - \lambda_0(k + 1)^\alpha - 1 = \lambda_0  \sum_{j = 0}^{k + 1}(j + 1)^\alpha + k + 2 - \lambda_0(k + 1)^\alpha - 1 \nonumber
		\\
		&= \lambda_0  \sum_{i = 1}^{k + 2}i^\alpha + k + 1 - \lambda_0(k + 1)^\alpha \nonumber
		\\
		&< \lambda_0 \frac{(k + 3)^{\alpha + 1}}{\alpha + 1} - \frac{\lambda_0}{\alpha + 1} + k + 1 - \lambda_0(k + 1)^\alpha.
	\end{align}
	It follows from (\ref{eq:pow0}) that for \(\lambda_k > \lambda_0\)
	\begin{align}
		(k + 1)^\alpha \leq \frac{\lambda_{k}}{\lambda_{0}}&\Leftrightarrow 
		k \leq \sqrt[\alpha]{\frac{\lambda_k}{\lambda_0}} - 1
		\\
		(k + 1)^\alpha > \frac{\lambda_{k} - 1}{\lambda_{0}}&\Leftrightarrow 
		k > \sqrt[\alpha]{\frac{\lambda_k - 1}{\lambda_0}} - 1. \label{eq:powk}
	\end{align}
	Inserting this into (\ref{eq:lo0}), then it follows for \(k \geq 0\) and \(\lambda_k \geq 1\)
	\begin{align} \label{eq:pow1}
		\Lo^\#(\lambda_k + 1, \alpha) &< \frac{\lambda_0}{\alpha + 1} \left(\sqrt[\alpha]{\frac{\lambda_k}{\lambda_0}} + 2\right)^{\alpha + 1} - \frac{\lambda_{0}}{\alpha + 1} + \sqrt[\alpha]{\frac{\lambda_k}{\lambda_0}} -
		\lambda_0 \frac{\lambda_k - 1}{\lambda_0} \nonumber
		\\
		&= \frac{\lambda_0}{\alpha + 1} \left(\sqrt[\alpha]{\frac{\lambda_k}{\lambda_0}} + 2\right)^{\alpha + 1} + \sqrt[\alpha]{\frac{\lambda_k}{\lambda_0}} - \frac{\lambda_{0}}{\alpha + 1} - \lambda_k + 1\nonumber
		\\
		&\leq \frac{\lambda_0}{\alpha + 1} \left(\sqrt[\alpha]{\frac{\lambda_k}{\lambda_0}} + 2\right)^{\alpha + 1} + \sqrt[\alpha]{\frac{\lambda_k}{\lambda_0}} - \frac{\lambda_{0}}{\alpha + 1} - 0 = \Lo^\#_\mathrm{up}(\lambda_k + 1, \alpha).
	\end{align}
	For \(\lopt = \lambda_0\) it holds that
	\begin{align}
		\Lo^\#_\mathrm{up}(\lambda_0, \alpha) &= \frac{\lambda_0}{\alpha + 1} \left(\sqrt[\alpha]{\frac{\lambda_0 - 1}{\lambda_0}} + 2\right)^{\alpha + 1} + \sqrt[\alpha]{\frac{\lambda_0 - 1}{\lambda_0}} - \frac{\lambda_{0}}{\alpha + 1} \nonumber
		\\
		& =\frac{\lambda_0}{\alpha + 1} \left(\left(\sqrt[\alpha]{\frac{\lambda_0 - 1}{\lambda_0}} + 2\right)^{\alpha + 1} - 1\right) + \sqrt[\alpha]{\frac{\lambda_0 - 1}{\lambda_0}} \nonumber
		\\
		&\geq \frac{\lambda_0}{\alpha + 1} \left(2^{\alpha + 1} - 1\right) + \sqrt[\alpha]{\frac{\lambda_0 - 1}{\lambda_0}} > 0 = \Lo^\#(\lambda_0, \alpha).
	\end{align}
	In (\ref{eq:lossD}) it was shown that \(\Lo^\#(\lopt, \alpha)\) decreases between \(\lambda_{k} + 1\) and \(\lambda_{k + 1}\). Because \(\Lo^\#_\mathrm{up}\) is an increasing function of \(\lopt\) it holds for all \(\lopt \geq \lambda_{0}\) that
	\begin{align}
		\Lo^\#_\mathrm{up}(\lopt, \alpha) > \Lo^\#(\lopt, \alpha).
	\end{align}
\end{proof}
\noindent
The upper bound (\ref{eq:lossPowUp}) is represented by the green dashed line in the left and middle plot of Fig.~\ref{fig:lossPow}.

\begin{theorem} 
	Let \(\Lo^\#(\lopt, \alpha)\) be the loss function (\ref{eq:lossSimple}) of the \(\R^\#\)-RS with restart parameter \(\alpha \geq 1\) and \(\lambda_{k} = \lceil \lambda_0 (k + 1)^\alpha \rceil\). Let
	\begin{align} \label{eq:lossPowLow}
		\Lo^{\#}_\mathrm{low}\left(\lopt, \alpha\right) := \begin{cases}
			\frac{\lambda_0}{\alpha + 1} \left(\sqrt[\alpha]{\frac{\lopt - 1}{\lambda_0}} - 1\right)^{\alpha + 1} \quad &\textnormal{for } \lopt > \lambda_0
			\\
			0 &\textnormal{for } \lopt = \lambda_0
		\end{cases}.
	\end{align}
	Then \(\Lo^{\#}_\mathrm{low}(\lopt, \alpha)\) is a lower bound of \(\Lo^\#(\lopt, \alpha)\).
\end{theorem}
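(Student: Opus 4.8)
The plan is to mirror the upper-bound proof, replacing the upper sum--integral estimate (\ref{eq:intUp}) by its lower companion. First I would record the estimate
$$\sum_{j=1}^{k} j^\alpha > \int_{0}^{k} x^\alpha\,\mathrm{d}x = \frac{k^{\alpha+1}}{\alpha+1},$$
which holds because $x^\alpha$ is increasing, so $\int_{j-1}^{j}x^\alpha\,\mathrm{d}x < j^\alpha$ for each $j$; this is the lower estimate drawn alongside (\ref{eq:intUp}) in the right plot of Fig.~\ref{fig:lossPow}.

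Next I would evaluate the loss at the lower corners $\lopt=\lambda_k$, where $\kopt(\lambda_k)=k$ and telescoping gives $\Lo^\#(\lambda_k,\alpha)=\sum_{j=0}^{k-1}\lambda_j$. Using $\lambda_j \geq \lambda_0(j+1)^\alpha$ from (\ref{eq:pow0}) and re-indexing the sum as $\lambda_0\sum_{i=1}^{k} i^\alpha$, the integral estimate yields $\Lo^\#(\lambda_k,\alpha) > \frac{\lambda_0}{\alpha+1}\,k^{\alpha+1}$. To convert the exponent $k$ into $\lambda_k$ I would insert the lower bound $k > \sqrt[\alpha]{(\lambda_k-1)/\lambda_0}-1$ from (\ref{eq:powk}) and raise both sides to the power $\alpha+1$, obtaining $\Lo^\#(\lambda_k,\alpha) > \Lo^\#_\mathrm{low}(\lambda_k,\alpha)$, which is exactly (\ref{eq:lossPowLow}) evaluated at $\lopt=\lambda_k$.

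The step I expect to be the main obstacle is this final exponentiation: raising to the power $\alpha+1$ preserves the inequality only when the base is nonnegative, so I would first verify that for $k\geq 1$ one has $\lambda_k \geq \lambda_0\,2^\alpha \geq 2\lambda_0$, hence $\lambda_k-1\geq\lambda_0$ and $\sqrt[\alpha]{(\lambda_k-1)/\lambda_0}-1\geq 0$. This same bound also guarantees that the piecewise formula (\ref{eq:lossPowLow}) is well defined and nonnegative on each tooth.

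Finally I would extend from the corners to the full sawtooth. On $\lambda_{k-1}+1\leq\lopt<\lambda_k$ with $k\geq 1$, the loss decreases by (\ref{eq:lossD}), so $\Lo^\#(\lopt,\alpha)>\Lo^\#(\lambda_k,\alpha)$, while $\Lo^\#_\mathrm{low}$ is increasing in $\lopt$ on $\lopt\geq\lambda_0+1$ (a composition of increasing maps acting on a nonnegative base), so $\Lo^\#_\mathrm{low}(\lopt,\alpha)<\Lo^\#_\mathrm{low}(\lambda_k,\alpha)$. Chaining these gives $\Lo^\#(\lopt,\alpha)>\Lo^\#_\mathrm{low}(\lopt,\alpha)$ for every $\lopt>\lambda_0$. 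The boundary case is immediate, $\Lo^\#(\lambda_0,\alpha)=0=\Lo^\#_\mathrm{low}(\lambda_0,\alpha)$, so the claimed lower bound holds throughout, with equality only at $\lopt=\lambda_0$.
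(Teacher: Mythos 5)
Your proposal is correct and follows essentially the same route as the paper's proof: the integral estimate (\ref{eq:intLow}) at the lower corners \(\lopt = \lambda_k\), conversion of \(k\) via (\ref{eq:powk}), and extension across each tooth using the decrease from (\ref{eq:lossD}) together with the monotonicity of \(\Lo^\#_\mathrm{low}\). Your explicit verification that \(\lambda_k - 1 \geq \lambda_0\) for \(k \geq 1\), so the base \(\sqrt[\alpha]{(\lambda_k - 1)/\lambda_0} - 1\) is nonnegative before raising to the power \(\alpha + 1\), is a welcome refinement of a point the paper only flags in a footnote about the term possibly being imaginary at \(\lopt = \lambda_0\).
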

\begin{proof}
	It holds that
	\begin{align} \label{eq:intLow}
		\sum_{j = 1}^{k} j^\alpha > \int_{0}^{k}x^\alpha \mathrm{d}x = \frac{k^{\alpha + 1}}{\alpha + 1}
	\end{align} 
	as visualized in the right plot of Fig. \ref{fig:lossPow}. It follows with (\ref{eq:pow0}) and (\ref{eq:powk})
	\begin{align} \label{eq:pow2}
		\Lo^\#(\lambda_k, \alpha) &= \sum_{j = 0}^{k}\lambda_j - \lambda_k = \sum_{j = 0}^{k - 1}\lambda_j = \sum_{j = 0}^{k - 1}\lceil \lambda_0 (j + 1)^\alpha\rceil \geq \lambda_0 \sum_{j = 0}^{k - 1}(j + 1)^\alpha = \lambda_0 \sum_{i = 1}^{k}i^\alpha \nonumber
		\\
		&> \lambda_0 \frac{k^{\alpha + 1}}{\alpha + 1} > \frac{\lambda_0}{\alpha + 1} \left(\sqrt[\alpha]{\frac{\lambda_k - 1}{\lambda_0}} - 1\right)^{\alpha+1} = \Lo^\#_\mathrm{low}(\lambda_k, \alpha)
	\end{align}
	for all \(k \geq 1\)\footnote{In the case of \(\lopt = \lambda_0\), the power term in (\ref{eq:pow2}) may be imaginary.}. In (\ref{eq:lossD}) it was shown that \(\Lo^\#(\lopt, \alpha)\) decreases between \(\lambda_{k - 1} + 1\) and \(\lambda_{k}\). Therefore, it follows for \(\lambda_{k - 1}  + 1 \leq \lopt < \lambda_{k}\) by using (\ref{eq:pow2}) that
	\begin{align}
		\Lo^\#(\lopt, \alpha) > \Lo^\#(\lambda_k, \alpha) > \frac{\lambda_0}{\alpha + 1} \left(\sqrt[\alpha]{\frac{\lambda_k - 1}{\lambda_0}} - 1\right)^{\alpha+1} > \frac{\lambda_0}{\alpha + 1} \left(\sqrt[\alpha]{\frac{\lopt - 1}{\lambda_0}} - 1\right)^{\alpha+1} = \Lo_\mathrm{low}^\#(\lopt, \alpha)
	\end{align}
	which holds for all \(k \geq 1\). Because \(\Lo^\#(\lambda_0, \alpha) = 0 = \Lo_\mathrm{low}^\#(\lambda_0, \alpha)\) it holds for all \(\lopt \geq \lambda_0\) that
	\begin{align}
		\Lo^\#(\lopt, \alpha) \geq \Lo_\mathrm{low}^\#(\lopt, \alpha).
	\end{align}
\end{proof}

The lower bound (\ref{eq:lossPowLow}) is represented by the blue dashed line in the left and middle plot of Fig.~\ref{fig:lossPow}. The figure shows that (\ref{eq:lossPowUp}) and (\ref{eq:lossPowLow}) provide only very rough bounds. The reason for this is that the estimates (\ref{eq:intUp}) and (\ref{eq:intLow}) give much larger or smaller values. Nevertheless, the subsequent section will demonstrate that the \(\R^\#\)-RS fails to satisfy a fundamental criterion of optimality. Consequently, there is no necessity for a more precise estimation of the power sums in (\ref{eq:intUp}) and (\ref{eq:intLow}).

\section{The Relative Loss Function} \label{sec:rloss}

For a fixed value of the restart parameter, the loss is unbounded for \(\lopt\) and tends to infinity. This holds for all strategy types, as evidenced by Figs. \ref{fig:lossPlus}, \ref{fig:lossTimes}, \ref{fig:lossMult}, and \ref{fig:lossPow}, presented in the previous section. To further characterize the restart effort, it is useful to introduce the relative loss. It measures the loss w.r.t. the minimal \(\lambda = \hat{\lambda}\) needed to complete the algorithm successfully. The relative loss is defined by \footnote{\(\rho\) is used as a substitute to show the dependence of the restart parameter. It can be replaced by \(\nu\) and \(\alpha\), respectively.}
\begin{align} \label{eq:rl}
	\ell(\lopt, \rho) := \frac{\Lo(\lopt, \rho)}{\lopt}.
\end{align}
If \(\ell_\mathrm{up}(\lopt, \rho)\) is an upper bound of the relative loss function, the value
\begin{align} \label{rlAs}
	\overline{\ell}_\mathrm{up}(\rho) := \lim_{\lopt \rightarrow \infty} \ell_\mathrm{up}\left(\lopt, \rho\right)
\end{align}
is called the \emph{asymptotic upper bound} of the relative loss function. The question to be addressed here is whether a finite asymptotic upper bound exists for a given strategy type. If a finite asymptotic upper bound \(\overline{\ell}_\mathrm{up} (\rho)\) exists for a given restart parameter then the RS is termed to be \emph{bounded}. Conversely, if no finite upper bound exists, the strategy is called \emph{unbounded}.

The magnitude of the loss depends strongly on the restart parameters. It can therefore be assumed that the asymptotic upper bound also depends on the choice of the restart parameter. In the case of a bounded RS, it is possible to search for the optimal restart parameter, that is, the one which yields the minimal asymptotic upper bound. A bounded RS is referred to as an \emph{asymptotically optimal} RS if there exists an restart parameter for which the asymptotic upper bound is minimal. This value is called the \emph{optimal choice} for the restart parameter, which is independent of \(\lopt\). The optimal choice of the restart parameter is also denoted with \(\hat{\rho}\).

Similarly, one can define an \emph{asymptotic lower bound} and an optimal choice of the restart parameter with respect to the lower bound. If there exist an infinite asymptotic lower bound, the RS is called \emph{strictly unbounded}. It is evident that strict unboundedness sufficiently implies unboundedness. Nevertheless, the converse is not necessarily true.

\subsection{Relative Loss Function and Optimal \(\nu\) of the \(\R^+\)-RS}

Assuming that \(\lopt\) is known, then the loss functions are functions that depend on the restart parameter. The left plot of Fig. \ref{fig:rlPlus} represents this for the \(\R^+\)-RS. The blue line represents the lower bound (\ref{eq:lossPLow}) and the green line represents the upper bound (\ref{eq:lossPUp}). The question is whether there is an optimal choice for \(\nu\) that minimizes the loss. The derivative of the lower bound (\ref{eq:lossPLow}) is
\begin{align}
	\frac{\mathrm{d}}{\mathrm{d}\nu} \Lo^+_\mathrm{low}(\nu) = -\frac{(\lopt + \lambda_0)(\lopt - \lambda_0)}{2\nu^2},
\end{align}
which is non-zero for all \(\nu\) and for \(\lopt > \lambda_0\). There is no optimal value of \(\nu\) that minimizes the lower bound of the \(\R^+\) loss function. This is also visible in the left plot of Fig. \ref{fig:rlPlus}.
The derivative of the upper bound (\ref{eq:lossPUp}) is
\begin{align}
	\frac{\mathrm{d}}{\mathrm{d}\nu} \Lo^+_\mathrm{up}(\nu) = -\frac{(\lopt - \lambda_0 - 1) (\lopt + \lambda_0 - 1)}{2\nu^2} + 1 = -\frac{\lopt^2 - 2 \lopt- \lambda_0^2 + 1 }{2\nu^2} + 1 = -\frac{(\lopt - 1)^2 - \lambda_0^2}{2\nu^2} + 1.
\end{align}
Setting the derivative to zero yields
\begin{align} \label{eq:nuOpt}
	\nopt = \sqrt{\halb ((\lopt - 1)^2 - \lambda_0^2)},
\end{align}
which minimizes the upper bound of the loss function. This is evident in the left plot of Fig. \ref{fig:rlPlus}, where the minimum of the upper bound occurs at \(\nu \approx 70\). It is important to note that the value of \(\nopt\) (\ref{eq:nuOpt}) strongly depends on \(\lopt\), which is generally unknown. As a result, there does not exist an optimal \(\hat{\nu}\) independent of the unknown \(\hat{\lambda}\). Therefore, \(\R^+\) cannot be an asymptotically optimal restart strategy. Moreover, \(\R^+\) is strictly unbounded, which is shown in the following theorem.

\begin{figure}
	\centering
	\begin{subfigure}{.45\textwidth}
		\includegraphics[width=1\linewidth]{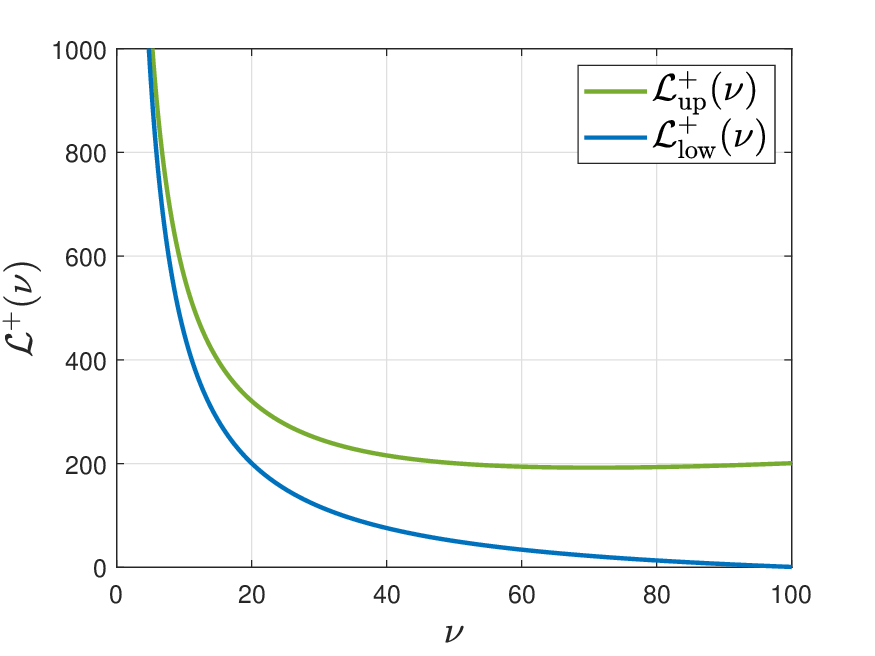}
	\end{subfigure}
	\begin{subfigure}{.45\textwidth}
		\includegraphics[width=1\linewidth]{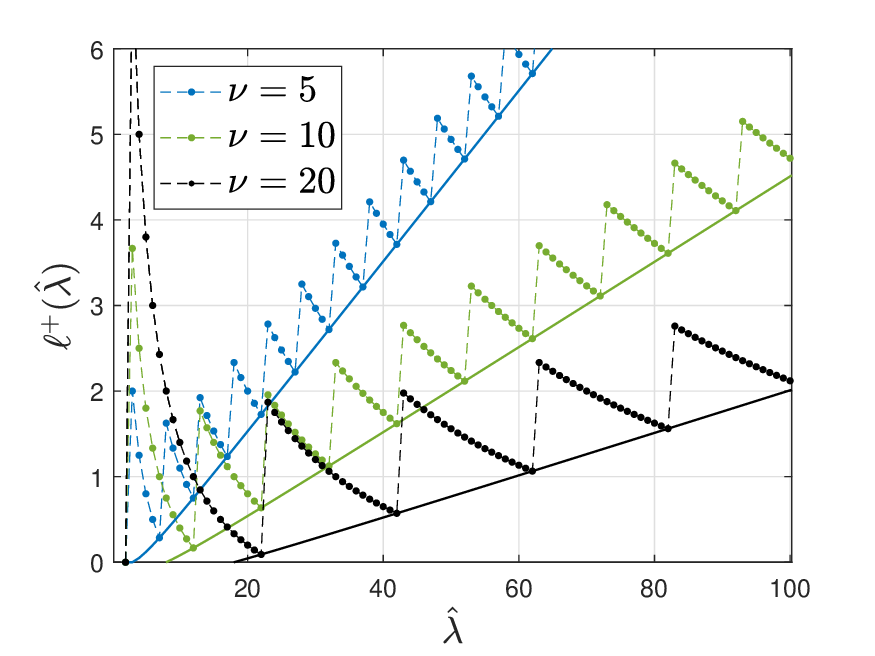}
	\end{subfigure}
	\caption{Left plot: lower bound (\ref{eq:lossPLow}) and upper bound (\ref{eq:lossPUp}) of the \(\R^+\) loss function depending on \(\nu\) for \(\lambda_0 = 2\) and \(\lopt = 100\). Right plot: markers show the relative loss function (\ref{eq:rlPlus}) for \(\lambda_0 = 2\) and solid lines represent the corresponding lower bound (\ref{eq:relP0}).}
	\label{fig:rlPlus}
\end{figure} 

\begin{theorem}
	Let 
	\begin{align} \label{eq:rlPlus}
		\ell^+(\lopt, \nu) := \frac{\Lo^+(\lopt, \nu)}{\lopt}
	\end{align}
	be the relative loss function (\ref{eq:rl}) for the \(\R^+\)-RS. Then \(\ell^+(\lopt, \nu)\) is strictly unbounded for all \(\nu \in \mathbb{N} \setminus \{0\}\).
\end{theorem}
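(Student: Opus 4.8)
The plan is to reuse the loss lower bound already proved in Theorem~\ref{th:pLow} and simply divide it by $\lopt$ to obtain a lower bound for the relative loss, then pass to the limit $\lopt \to \infty$. Recall that Theorem~\ref{th:pLow} gives $\Lo^+(\lopt, \nu) \geq \Lo^+_\mathrm{low}(\lopt, \nu) = \halb\left(\lopt - \lambda_0\right)\left(\frac{\lopt + \lambda_0}{\nu} - 1\right)$ for all $\lopt \geq \lambda_0$. Dividing this inequality by $\lopt > 0$ immediately produces a lower bound for the relative loss,
\begin{align}
	\ell^+(\lopt, \nu) = \frac{\Lo^+(\lopt, \nu)}{\lopt} \geq \frac{\Lo^+_\mathrm{low}(\lopt, \nu)}{\lopt} =: \ell^+_\mathrm{low}(\lopt, \nu),
\end{align}
so $\ell^+_\mathrm{low}$ is a candidate for an asymptotic lower bound in the sense of the definition preceding this theorem.

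The next step is to rearrange $\ell^+_\mathrm{low}$ so that its leading order in $\lopt$ is exposed. Expanding $(\lopt - \lambda_0)(\lopt + \lambda_0) = \lopt^2 - \lambda_0^2$ in the numerator and simplifying gives
\begin{align}
	\ell^+_\mathrm{low}(\lopt, \nu) = \halb\left(\frac{\lopt}{\nu} - \frac{\lambda_0^2}{\nu \lopt} + \frac{\lambda_0}{\lopt} - 1\right).
\end{align}
The purpose of this manipulation is to separate the dominant term $\frac{\lopt}{2\nu}$, which is linear in $\lopt$, from the remaining terms, which are either constant or vanish in the limit.

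Finally I would take the asymptotic limit. For every fixed $\nu \in \mathbb{N} \setminus \{0\}$, the terms $\frac{\lambda_0^2}{\nu \lopt}$ and $\frac{\lambda_0}{\lopt}$ tend to zero while $\frac{\lopt}{2\nu}$ diverges, so
\begin{align}
	\lim_{\lopt \rightarrow \infty} \ell^+_\mathrm{low}(\lopt, \nu) = \infty.
\end{align}
Since an infinite asymptotic lower bound is exactly the defining condition for a strictly unbounded strategy, and since this holds for each admissible $\nu$, the claim follows.

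I do not expect a genuine obstacle here: given Theorem~\ref{th:pLow}, the argument is essentially a one-line computation. The only point that deserves a moment of care is confirming that the growth is truly linear in $\lopt$ and is not cancelled by the $-1$ term, which is clear once the expression is written in the separated form above. Intuitively, this reflects that an additive scheme needs on the order of $\lopt/\nu$ restarts, each costing $\Theta(\lopt)$ evaluations, so the total loss grows quadratically in $\lopt$ and the relative loss grows linearly, regardless of the fixed increment $\nu$.
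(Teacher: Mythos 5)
Your proposal is correct and follows essentially the same route as the paper's own proof: divide the lower bound $\Lo^+_\mathrm{low}$ from Theorem~\ref{th:pLow} by $\lopt$, isolate the dominant term $\lopt/(2\nu)$, and let $\lopt \rightarrow \infty$ to exhibit an infinite asymptotic lower bound. Your expanded form $\halb\bigl(\lopt/\nu - \lambda_0^2/(\nu\lopt) + \lambda_0/\lopt - 1\bigr)$ agrees algebraically with the paper's rearrangement, so no gap remains.
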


\begin{proof}
	In Theorem (\ref{th:pLow}) it was demonstrated that
	\begin{align} 
		\Lo^+_\mathrm{low}\left(\lopt, \nu\right) =  \halb \left(\lopt - \lambda_0\right)\left(\frac{\lopt + \lambda_0}{\nu} - 1\right).
	\end{align}
	is a lower bound of the loss function of the \(\R^+\)-RS. This implies that 
	\begin{align} \label{eq:relP0}
		\ell^+_\mathrm{low}\left(\lopt, \nu\right) := \frac{\Lo^+_\mathrm{low}\left(\lopt, \nu\right)}{\lopt} =  \halb \left(1 - \frac{\lambda_0}{\lopt}\right)\left(\frac{\lopt + \lambda_0}{\nu} - 1\right) = \frac{\lopt + \lambda_0}{2\nu} - \frac{\left(\lopt + \lambda_0\right)\lambda_0}{2\lopt\nu} - \halb + \frac{\lambda_0}{2\lopt}
	\end{align}
	is a lower bound of the relative loss function (\ref{eq:rlPlus}). For \(\lopt \rightarrow \infty\) the first expression of (\ref{eq:relP0}) tends to infinity. The remaining expressions are finite. Therefore, an infinite asymptotic lower bound exists.
\end{proof}

The relative loss (\ref{eq:rlPlus}) is illustrated in the right plot of Fig. \ref{fig:rlPlus} for varying values of \(\nu\). It can be seen that for a fixed \(\nu\) the relative loss exhibits a linear trend with \(\lopt\) and approaches infinity. Furthermore, for large \(\lopt\) values the relative loss increases for smaller \(\nu\) values. The solid lines in the right plot of Fig. \ref{fig:rlPlus} show the lower bound (\ref{eq:relP0}) of the relative loss function.

\subsection{Relative Loss and Optimal \(\rho\) of the Multiplicative Strategy Types}

The relative loss (\ref{eq:rl}) of the \(\R^\times\)-RS is given by
\begin{align} \label{eq:rlMod}
	\ell^\times \left(\lopt, \rho\right) = \frac{\Lo^\times \left(\lopt, \rho\right)}{\lopt}.
\end{align}
Figure \ref{fig:rlMod} shows (\ref{eq:rlMod}) depending on \(\lopt\). In contrast to the \(\R^+\)-RS (see the right plot of Fig. \ref{fig:rlPlus}) the relative loss exhibits an upper bound. In order to derive the upper bound for the relative loss, start with the upper bound of the loss function (\ref{eq:lossModUp}) and divide it by \(\lopt\), i.e.,
\begin{align} \label{eq:rlModUp}
	\ell_\mathrm{up}^\times \left(\lopt, \rho\right) &:= \frac{\Lo_\mathrm{up}^\times \left(\lopt, \rho\right)}{\lopt} = \frac{1}{\lopt} \left( \lopt \left(\rho + \frac{1}{\rho - 1}\right) - \frac{\lambda_0}{\rho - 1} + \frac{\ln\left(\lopt / \lambda_0\right)}{\ln\left(\rho\right)}\right) \nonumber
	\\
	&= \rho + \frac{1}{\rho - 1} - \frac{\lambda_0}{\lopt(\rho - 1)} + \frac{\ln\left(\lopt / \lambda_0\right)}{\lopt\ln\left(\rho\right)}.
\end{align}
The corresponding asymptotic upper bound is
\begin{align} \label{eq:rlAsMod}
	\overline{\ell}_\mathrm{up}^\times(\rho) = \lim_{\lopt \rightarrow \infty}\ell_\mathrm{up}^\times \left(\lopt, \rho\right) = \rho + \frac{1}{\rho - 1}.
\end{align}
It can be seen that this expression is finite for all \(\rho > 1\). It can thus be concluded that the \(\R^\times\)-RS is bounded. The upper bound (\ref{eq:rlModUp}) is illustrated in Fig. \ref{fig:rlMod} by the solid lines. It can be observed that for sufficient large \(\lopt\), the upper bound provides a satisfactory approximation of the upper corners of the relative loss function.

The left plot of Fig. \ref{fig:rlMod} shows that the maximum relative loss increases with smaller \(\rho\), while the middle plot shows that the maximum relative loss increases with larger \(\rho\). Therefore, it can be assumed that there is a value of \(\rho\) where the maximal relative loss is minimal. In both figures, the smallest relative loss occurs at about \(\rho = 2\). For sufficiently large \(\lopt\), these observations are independent of \(\lambda_0\). This can be seen in the right plot of Fig. \ref{fig:rlMod}, which shows the loss functions for \(\lambda_0 = 10\). The maximum values of the asymptotic behavior are identical to those for \(\lambda_0 = 2\). These observations are confirmed by the following theorem:

\begin{figure}
	\centering
	\hspace{-15pt}
	\begin{subfigure}{.35\textwidth}
		\includegraphics[width=1\linewidth]{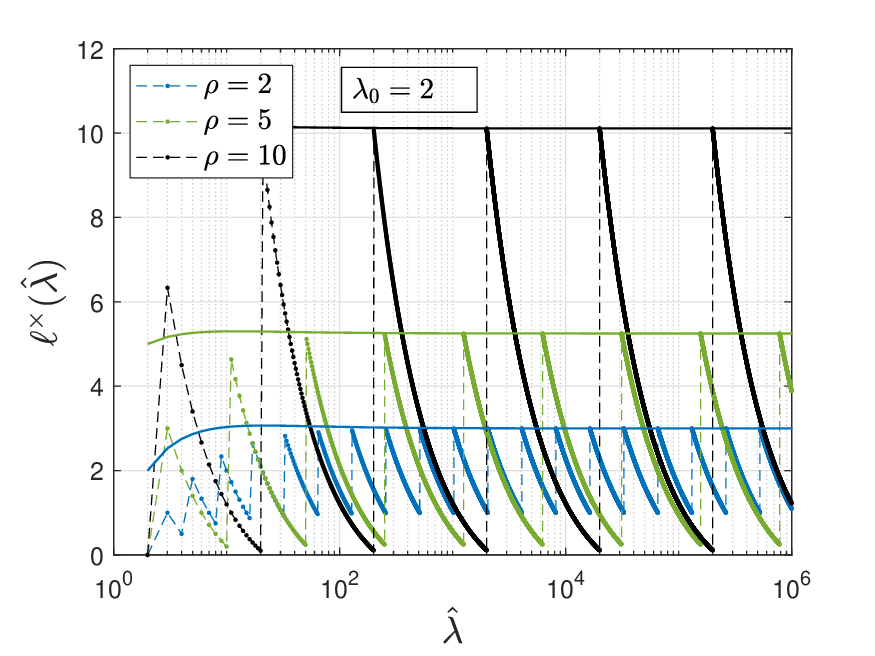}
	\end{subfigure}
	\hspace{-15pt}
	\begin{subfigure}{.35\textwidth}
		\includegraphics[width=1\linewidth]{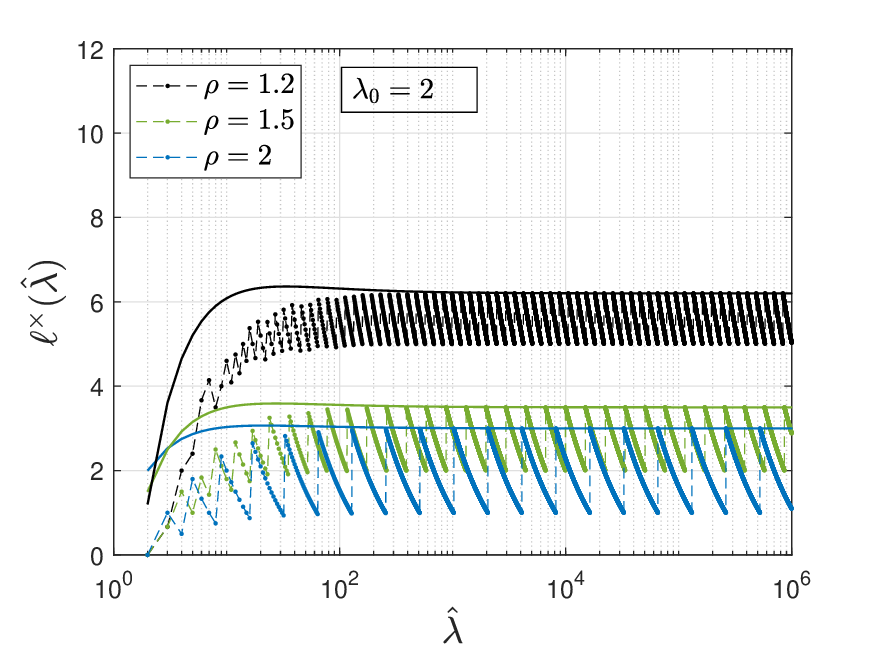}
	\end{subfigure}
	\hspace{-15pt}
	\begin{subfigure}{.35\textwidth}
		\includegraphics[width=1\linewidth]{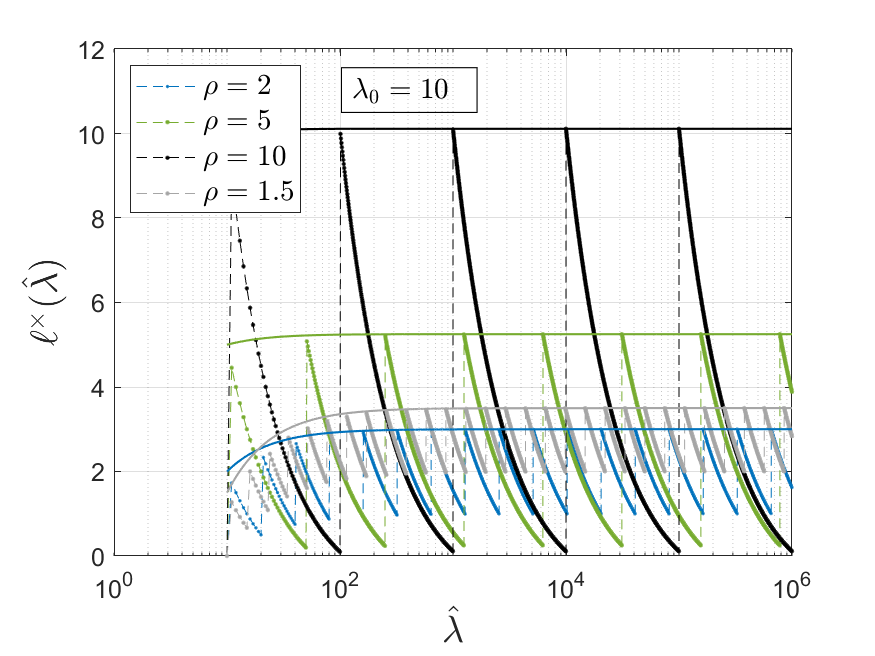}
	\end{subfigure}
	\hspace{-20pt}
	\caption{Markers with dashed lines: relative loss function (\ref{eq:rlMod}) of the \(\R^{\times}\)-RS depending on \(\lopt\). Values were determined numerically with Alg. \ref{alg:lossNum}. Solid lines: upper bound of the relative loss function (\ref{eq:rlModUp}).}
	\label{fig:rlMod}
\end{figure}

\begin{theorem} \label{th:rlMod}
	The \(\R^\times\)-RS is an asymptotic optimal RS with \(\ropt = 2\). Furthermore, it holds for the asymptotic upper bound (\ref{eq:rlAsMod}) that \(\overline{\ell}^{\times}_\mathrm{up}\left(\ropt\right) = 3.\)
\end{theorem}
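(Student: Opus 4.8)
The plan is to reduce the claim to a routine single-variable minimization, since all the analytic work has already been done in deriving the closed form of the asymptotic upper bound. By (\ref{eq:rlAsMod}) we have $\overline{\ell}_\mathrm{up}^\times(\rho) = \rho + \tfrac{1}{\rho-1}$ for $\rho > 1$, and the preceding discussion has already established that the $\R^\times$-RS is bounded (the limit is finite for every $\rho>1$). Hence, by the definition of asymptotic optimality, it suffices to show that this function of the single variable $\rho$ attains a minimum on $(1,\infty)$, to locate the minimizer, and to evaluate the minimal value.

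First I would differentiate with respect to $\rho$, obtaining $\tfrac{\mathrm{d}}{\mathrm{d}\rho}\overline{\ell}_\mathrm{up}^\times(\rho) = 1 - \tfrac{1}{(\rho-1)^2}$. Setting this to zero gives $(\rho-1)^2 = 1$, whose only solution in the admissible domain $\rho>1$ is $\rho = 2$ (the root $\rho=0$ is discarded). Next I would confirm that this critical point is genuinely a minimum rather than a saddle or maximum by computing the second derivative $\tfrac{\mathrm{d}^2}{\mathrm{d}\rho^2}\overline{\ell}_\mathrm{up}^\times(\rho) = \tfrac{2}{(\rho-1)^3}$, which is strictly positive throughout $(1,\infty)$. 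Since the bound is therefore strictly convex on the whole domain, the unique critical point $\rho=2$ is automatically the global minimizer, so I need not separately rule out competing local minima. As a sanity check one may also note the boundary behaviour $\overline{\ell}_\mathrm{up}^\times(\rho)\to\infty$ as $\rho\to 1^+$ and as $\rho\to\infty$, which is consistent with an interior global minimum.

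Finally I would substitute $\rho = 2$ into the closed form to get $\overline{\ell}_\mathrm{up}^\times(2) = 2 + \tfrac{1}{2-1} = 3$, establishing both parts of the statement: the optimal choice is $\ropt = 2$ and the minimal asymptotic upper bound equals $3$. I would close by remarking that this value is independent of both $\lopt$ and $\lambda_0$, which is precisely what makes $\ropt$ a legitimate \emph{optimal choice} in the sense of the earlier definition. Honestly there is no substantial obstacle in this proof; the only point demanding a little care is justifying that the stationary point is the \emph{global} minimum on an open interval rather than merely a local one, and the strict convexity argument via the positive second derivative settles this cleanly.
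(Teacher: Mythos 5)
Your proposal is correct and follows essentially the same route as the paper: differentiate the closed-form asymptotic upper bound \(\rho + \frac{1}{\rho-1}\), solve for the critical point \(\rho = 2\), and substitute to obtain the value \(3\). You are in fact slightly more careful than the paper, which stops at the first-order condition, whereas you verify via the second derivative \(\frac{2}{(\rho-1)^3} > 0\) that the critical point is the strict global minimum on \((1,\infty)\) --- a small but genuine improvement in rigor, not a different approach.
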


\begin{proof}
	Setting the derivative of the asymptotic upper bound (\ref{eq:rlAsMod}) to zero yields
	\begin{align} 
		\frac{\mathrm{d}}{\mathrm{d}\rho} \overline{\ell}^{\times}_\mathrm{up}(\rho) = 1 - \frac{1}{\left(\rho - 1\right)^2} = \frac{\rho^2 - 2\rho}{\left(\rho - 1\right)^2} = 0 \Leftrightarrow 2 \rho = \rho^2 \Leftrightarrow \rho = 2,
	\end{align}
	thus indicating that the optimal restart parameter is \(\ropt = 2\).
	Inserting this into the asymptotic upper bound (\ref{eq:rlModUp}), it follows that
	\begin{align}
		\overline{\ell}^{\times}_\mathrm{up}(\ropt) = 2 +\frac{1}{2 - 1} = 3.
	\end{align}
\end{proof}

Figure \ref{fig:rlMod} illustrates that the asymptotic lower bound of the relative loss function is not minimal for \(\rho = 2\). Instead, it is a monotonously decreasing function. The lower bound (\ref{eq:lossModLow}) implies that
\begin{align}
	\overline{\ell}_\mathrm{low}^\times \left(\rho\right) = \lim_{\lopt \rightarrow \infty} \frac{\Lo^\times_\mathrm{low}\left(\lopt, \rho\right)}{\lopt} = \lim_{\lopt \rightarrow \infty} \frac{\lopt - \lambda_0 - 1}{\lopt \left(\rho - 1\right) } = \frac{1}{\rho - 1}.
\end{align}
is an asymptotic lower bound of the relative loss function of the \(\R^\times\)-RS. This expression is minimized when \(\rho \rightarrow \infty\). Therefore, an asymptotic lower bound exist for all \(\rho\), but there is not an optimal choice of \(\rho\) w.r.t. the lower bound.

The relative loss of the \(\R^*\)-RS is given by
\begin{align} \label{eq:rlM}
	\ell^* \left(\lopt, \rho\right) = \frac{\Lo^* \left(\lopt, \rho\right)}{\lopt}
\end{align}
and is represented by the markers with dashed lines in Fig. \ref{fig:rlM}. It is evident that for integer values of \(\rho\), the relative loss curves are identical to those of Fig. \ref{fig:rlMod}. However, even for decimal numbers, the differences between the relative loss of the \(\R^\times\)-RS and the \(\R^*\)-RS are only barely visible. This behavior is as expected, as demonstrated in Section \ref{sec:loss}, where it was shown that the upper bound (\ref{eq:lossUp}) of the \(\R^*\)-RS is identical to that of the \(\R^\times\)-RS. It can thus be concluded that the upper bound of the relative loss function and the asymptotic upper bounds are identical, i.e.,
\begin{align} \label{eq:rlAsM}
	\overline{\ell}_\mathrm{up}^*(\rho) = \rho + \frac{1}{\rho - 1}
\end{align}
is an asymptotic upper bound of the \(\R^*\)-RS. In consequence, Theorem \ref{th:rlMod} holds also for the \(\R^*\)-RS.

\begin{figure}
	\centering
	\hspace{-15pt}
	\begin{subfigure}{.35\textwidth}
		\includegraphics[width=1\linewidth]{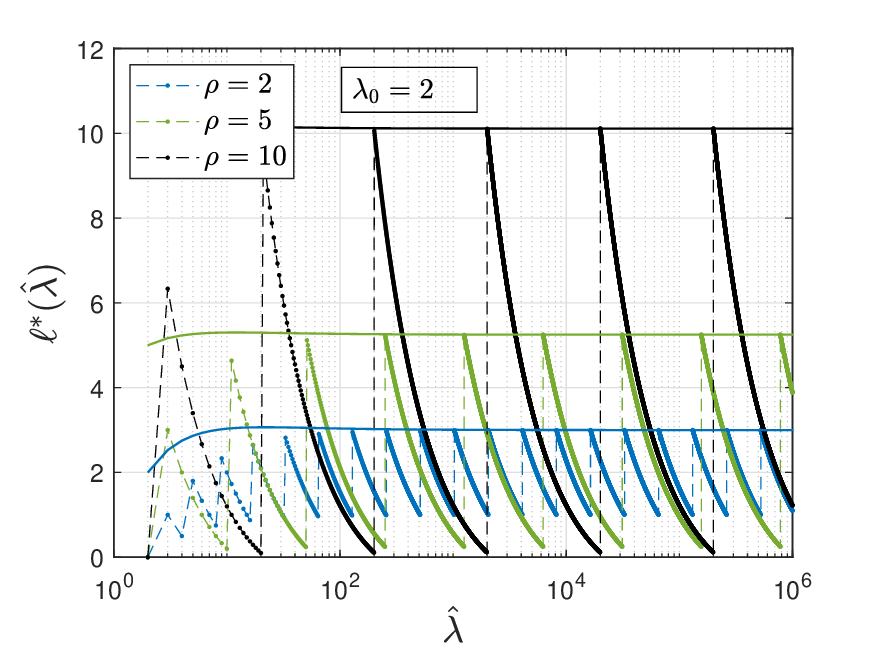}
	\end{subfigure}
	\hspace{-15pt}
	\begin{subfigure}{.35\textwidth}
		\includegraphics[width=1\linewidth]{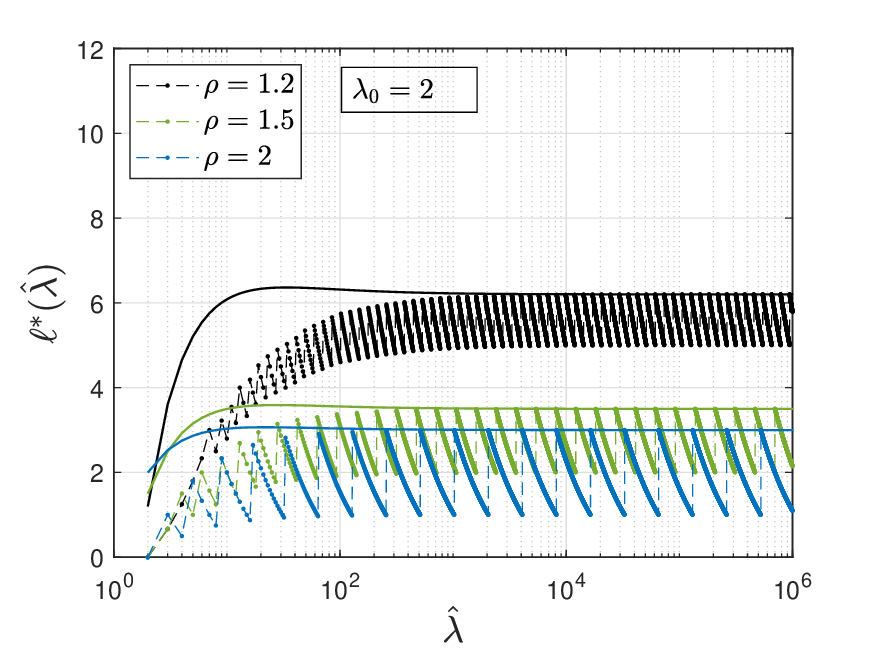}
	\end{subfigure}
	\hspace{-15pt}
	\begin{subfigure}{.35\textwidth}
		\includegraphics[width=1\linewidth]{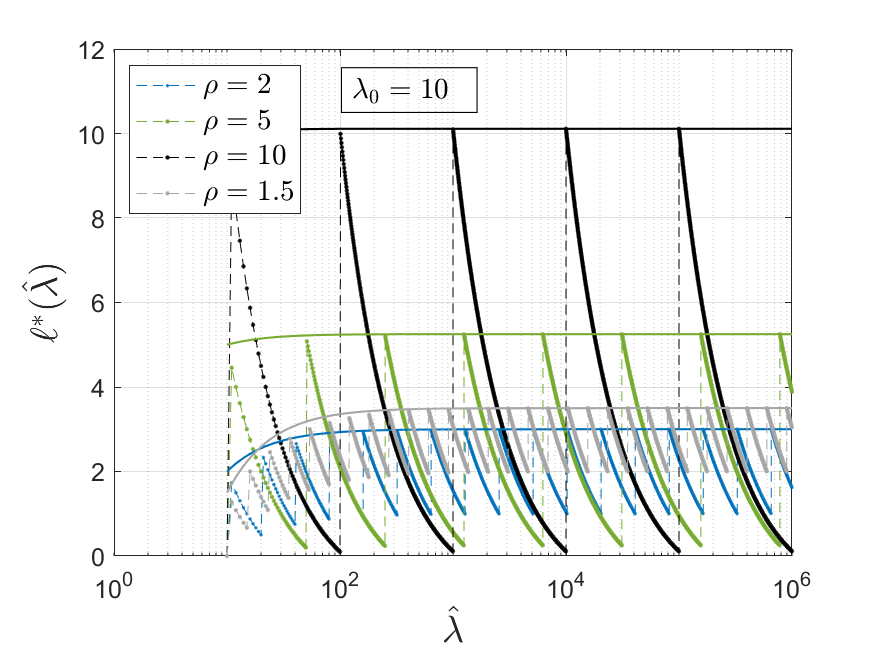}
	\end{subfigure}
	\hspace{-20pt}
	\caption{Markers with dashed lines: relative loss function (\ref{eq:rlM}) of the \(\R^*\)-RS depending on \(\lopt\). Values were determined numerically with Alg. \ref{alg:lossNum}. Solid lines: upper bound of the relative loss function.}
	\label{fig:rlM}
\end{figure}

\begin{theorem} \label{th:rlM}
	The \(\R^*\)-RS is an asymptotic optimal RS with \(\ropt = 2\). Furthermore, it holds for the asymptotic upper bound (\ref{eq:rlAsM}) that \(\overline{\ell}^*_\mathrm{up}\left(\ropt\right) = 3.\)
\end{theorem}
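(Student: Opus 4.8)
The plan is to exploit the observation, already established in the text immediately preceding this theorem, that the asymptotic upper bound of the \(\R^*\)-RS coincides with that of the \(\R^\times\)-RS. Concretely, equation (\ref{eq:rlAsM}) states that \(\overline{\ell}_\mathrm{up}^*(\rho) = \rho + \frac{1}{\rho - 1}\), which is literally the same expression (\ref{eq:rlAsMod}) that governs the \(\R^\times\)-RS. This identity descends from the fact that the loss upper bound (\ref{eq:lossUp}) of \(\R^*\) equals the loss upper bound (\ref{eq:lossModUp}) of \(\R^\times\); dividing both by \(\lopt\) and letting \(\lopt \to \infty\) preserves the equality. Since the search for the optimal restart parameter depends only on this asymptotic expression, the entire argument reduces to the one already carried out in Theorem \ref{th:rlMod}.

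First I would differentiate the asymptotic upper bound with respect to \(\rho\), obtaining
\begin{align}
	\frac{\mathrm{d}}{\mathrm{d}\rho}\,\overline{\ell}^*_\mathrm{up}(\rho) = 1 - \frac{1}{(\rho - 1)^2}.
\end{align}
Setting this to zero gives \((\rho - 1)^2 = 1\), whose only root with \(\rho > 1\) is \(\rho = 2\). I would then confirm that this critical point is a minimum by noting that the second derivative \(2/(\rho - 1)^3\) is strictly positive on \(\rho > 1\), so \(\overline{\ell}^*_\mathrm{up}\) is convex on the admissible domain and \(\ropt = 2\) indeed minimizes it.

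Finally I would substitute \(\ropt = 2\) back into (\ref{eq:rlAsM}) to obtain
\begin{align}
	\overline{\ell}^*_\mathrm{up}(\ropt) = 2 + \frac{1}{2 - 1} = 3.
\end{align}
Because a finite asymptotic upper bound exists and is minimized at a value of \(\rho\) that does not depend on the unknown \(\lopt\), the \(\R^*\)-RS qualifies as an asymptotically optimal RS with \(\ropt = 2\), as claimed.

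I do not anticipate any genuine obstacle: the substance of the proof is the reduction to the already-proven Theorem \ref{th:rlMod}, which is available precisely because the two multiplicative strategy types share an identical asymptotic upper bound. The only point worth stating explicitly is the transfer of the shared loss bound to the relative setting, and this has already been argued in the discussion surrounding Fig.~\ref{fig:lossComp} and equation (\ref{eq:rlAsM}); everything else is the same elementary one-variable minimization.
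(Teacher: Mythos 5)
Your proposal is correct and follows the paper's own route exactly: the paper likewise reduces Theorem \ref{th:rlM} to Theorem \ref{th:rlMod} via the identity of the asymptotic upper bounds \(\overline{\ell}^*_\mathrm{up}(\rho) = \overline{\ell}^\times_\mathrm{up}(\rho) = \rho + \frac{1}{\rho-1}\), then minimizes by setting the derivative to zero at \(\rho = 2\) and substituting to obtain the value 3. Your explicit second-derivative check that the critical point is a minimum is a small (welcome) addition the paper leaves implicit, but it does not change the argument.
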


\begin{proof}
	Identical to the proof of Theorem \ref{th:rlMod}.
\end{proof}

The lower bounds of the two strategy types, however, are slightly different. By using (\ref{eq:lLow}) the asymptotic lower bound is
\begin{align}
	\overline{\ell}_\mathrm{low}^* \left(\rho\right) = \lim_{\lopt \rightarrow \infty} \frac{\Lo^*_\mathrm{low}\left(\lopt, \rho\right)}{\lopt} = \lim_{\lopt \rightarrow \infty}\frac{1}{\rho - 1} \frac{1}{\lopt}\left(\lopt - \lambda_{0} - \frac{\ln\left(\lopt / \lambda_0\right)}{\ln\left(\rho\right)} - 1\right) = \frac{1}{\rho - 1}.
\end{align}
Hence, the asymptotic lower bounds of the \(\R^*\)-RS and the \(\R^\times\)-RS are identical. Also for the \(\R^*\)-RS, there is no optimal choice of \(\rho\) w.r.t. the lower bound.

\subsection{Relative Loss of the \(\R^\#\)-RS}

The relative loss of the \(\R^\#\)-RS is defined by
\begin{align} \label{eq:rlPow}
	\ell^\# \left(\lopt, \alpha\right) := \frac{\Lo^\# \left(\lopt, \alpha\right)}{\lopt},
\end{align}
and is represented in Fig. \ref{fig:rlPow} for different values of \(\alpha\) and \(\lambda_0\). For sufficiently large \(\lopt\), the relative loss is observed to be smaller for larger values of \(\alpha\) and for larger values of \(\lambda_0\). It can be observed that for a fixed \(\alpha\), the relative loss tends to infinity with \(\lopt\). This leads to the hypothesis that \(\R^\#\) is a strictly unbounded RS.

\begin{figure}
	\centering
	\hspace{-15pt}
	\begin{subfigure}{.45\textwidth}
		\includegraphics[width=1\linewidth]{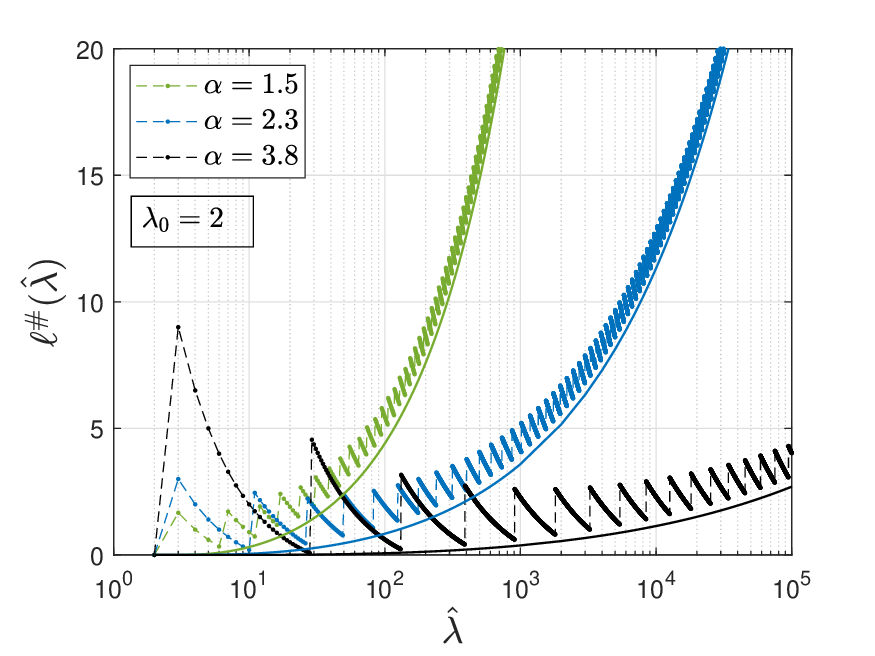}
	\end{subfigure}
	\begin{subfigure}{.45\textwidth}
		\includegraphics[width=1\linewidth]{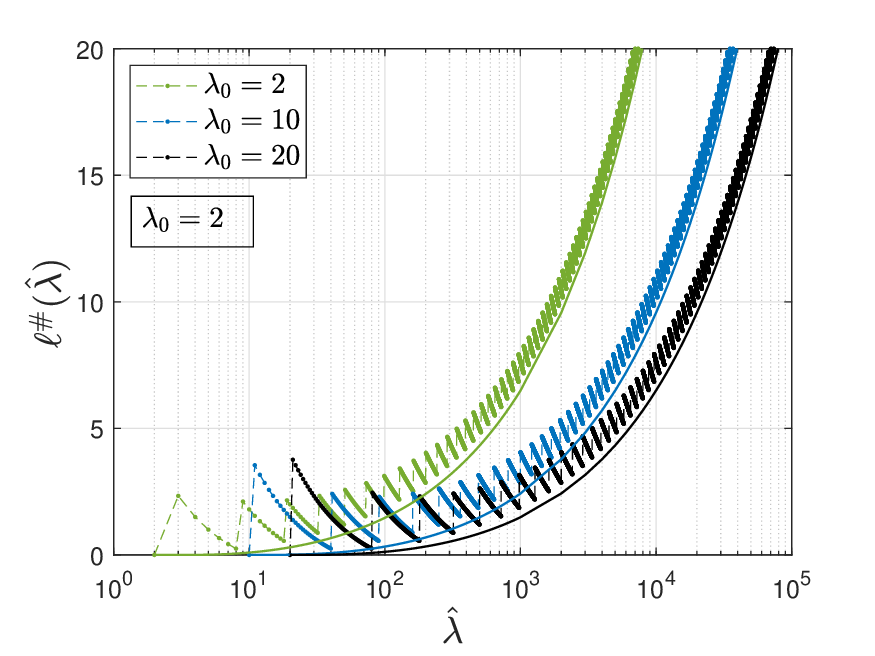}
	\end{subfigure}
	\caption{Markers with dashed lines: relative loss function (\ref{eq:rlPow}) of the \(\R^\#\)-RS depending on \(\lopt\). Values were determined numerically with Alg. \ref{alg:lossNum}. Solid lines: lower bound of the relative loss function (\ref{eq:rlPowLow}).}
	\label{fig:rlPow}
\end{figure} 

\begin{theorem}
	The \(\R^\#\) is a strictly unbounded strategy type.
\end{theorem}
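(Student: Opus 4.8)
The plan is to invoke the loss lower bound (\ref{eq:lossPowLow}) established in the preceding theorem and to show that the induced lower bound on the \emph{relative} loss diverges as \(\lopt \to \infty\). Since strict unboundedness is defined precisely by the existence of an infinite asymptotic lower bound, exhibiting one suffices, and a lower bound on the loss function is already at hand. Concretely, I would divide (\ref{eq:lossPowLow}) by the positive quantity \(\lopt\). Because \(\Lo^\#(\lopt, \alpha) \geq \Lo^\#_\mathrm{low}(\lopt, \alpha)\), this division preserves the inequality, so
\begin{align} \label{eq:rlPowLow}
	\ell^\#_\mathrm{low}(\lopt, \alpha) := \frac{\Lo^\#_\mathrm{low}(\lopt, \alpha)}{\lopt} = \frac{\lambda_0}{(\alpha + 1)\lopt} \left(\sqrt[\alpha]{\frac{\lopt - 1}{\lambda_0}} - 1\right)^{\alpha + 1}
\end{align}
is a genuine lower bound of the relative loss function \(\ell^\#(\lopt, \alpha)\) for \(\lopt > \lambda_0\), and it only remains to compute its limit.

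The next step is to isolate the growth order of the bracketed term. As \(\lopt \to \infty\) the root \(\sqrt[\alpha]{(\lopt - 1)/\lambda_0}\) grows without bound, so the constant \(-1\) becomes asymptotically negligible. To render this rigorous I would factor
\begin{align}
	\left(\sqrt[\alpha]{\frac{\lopt - 1}{\lambda_0}} - 1\right)^{\alpha + 1} = \left(\frac{\lopt - 1}{\lambda_0}\right)^{\frac{\alpha + 1}{\alpha}} \left(1 - \left(\frac{\lambda_0}{\lopt - 1}\right)^{\frac{1}{\alpha}}\right)^{\alpha + 1},
\end{align}
in which the second factor converges to \(1\). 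Substituting into (\ref{eq:rlPowLow}) and cancelling one power of \(\lopt\) against the prefactor leaves a leading term of order
\begin{align}
	\frac{1}{\alpha + 1}\left(\frac{\lopt}{\lambda_0}\right)^{\frac{1}{\alpha}},
\end{align}
so that \(\ell^\#_\mathrm{low}(\lopt, \alpha)\) grows like the positive power \(\lopt^{1/\alpha}\).

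Finally, since the restart parameter satisfies \(\alpha \geq 1\), the exponent \(1/\alpha\) is strictly positive, whence \(\lim_{\lopt \to \infty} \ell^\#_\mathrm{low}(\lopt, \alpha) = \infty\) for every admissible \(\alpha\). This yields an infinite asymptotic lower bound of the relative loss function and therefore establishes strict unboundedness of the \(\R^\#\)-RS for all parameter choices, as claimed. The main obstacle is purely the asymptotic bookkeeping in the middle step: one must verify that raising \(\sqrt[\alpha]{(\lopt-1)/\lambda_0} - 1\) to the power \(\alpha+1\) produces exactly the exponent \((\alpha+1)/\alpha\) on \(\lopt\), so that after dividing by \(\lopt\) the surviving exponent \(1/\alpha\) remains positive. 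The factoring above is what makes this precise; once it is in place, the remainder reduces to elementary evaluation of limits.
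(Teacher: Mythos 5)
Your proposal is correct and follows essentially the same route as the paper: divide the lower bound (\ref{eq:lossPowLow}) by \(\lopt\), factor out the dominant root so the remaining bracket tends to \(1\), and conclude that the leading term diverges. Your explicit identification of the surviving exponent \(1/\alpha > 0\) is just a more detailed version of the paper's observation that the prefactor \(\frac{\lambda_0}{\alpha+1}\frac{1}{\lopt}\bigl(\frac{\lopt-1}{\lambda_0}\bigr)^{(\alpha+1)/\alpha}\) diverges.
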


\begin{proof}
	By using the lower bound (\ref{eq:lossPowLow}) of the loss function, it follows that 
	\begin{align} \label{eq:rlPowLow}
		\ell^\#_\mathrm{low}\left(\lopt, \alpha\right) &= \frac{\Lo^{\#}_\mathrm{low}\left(\lopt, \alpha\right)}{\lopt} = \frac{1}{\lopt}\frac{\lambda_0}{\alpha + 1} \left(\sqrt[\alpha]{\frac{\lopt - 1}{\lambda_0}} - 1\right)^{\alpha + 1} = \frac{1}{\lopt}\frac{\lambda_0}{\alpha + 1} \left(\sqrt[\alpha]{\frac{\lopt - 1}{\lambda_0}} \frac{\sqrt[\alpha]{\frac{\lopt - 1}{\lambda_0}} - 1}{\sqrt[\alpha]{\frac{\lopt - 1}{\lambda_0}}}\right)^{\alpha + 1} \nonumber
		\\
		&= \frac{\lambda_0}{\alpha + 1} \frac{1}{\lopt} \sqrt[\alpha]{\frac{\lopt - 1}{\lambda_0}}^{\alpha + 1} \left(1 - \sqrt[\alpha]{\frac{\lambda_0}{\lopt - 1}} \right)^{\alpha + 1}
	\end{align}
	is a lower bound of the relative loss function (\ref{eq:rlPow}) for \(\lopt > \lambda_0\). For \(\lopt \rightarrow \infty\) the expression within the parentheses approaches 1, while the first terms diverges. Consequently, the asymptotic lower bound is infinite indicating that \(\R^\#\) is a strictly unbounded RS.
\end{proof}

The solid lines in Fig. \ref{fig:rlPow} illustrate the lower bound (\ref{eq:rlPowLow}) of the relative loss function.

\section{Conclusion and Outlook} \label{sec:end}

This work examined restart strategies (RS) for algorithms that rely on an algorithmic parameter, denoted with \(\lambda\), to achieve success. The optimal choice of this algorithmic parameter for each restart is a question that has not been investigated so far. To estimate and compare different restart strategies, the set of all restart strategies was divided into parameter-dependent subsets, which were designated as strategy types. The objective was to evaluate the impact of varying restart strategies and to estimate the influence of the restart parameter. For this purpose, the loss function was introduced, which measures the number of function evaluations of an RS compared to the number of function evaluations of the optimal strategy. Due to the complexity of the loss function, upper and lower bounds of the loss function were derived for each strategy type under consideration. These bounds have been expressed as a function of the optimal \(\lambda\) and the restart parameter.

To further examine the restart effort, the relative loss has been introduced as a measure of the loss relative to the optimal \(\lambda\). One requisite for an appropriate RS is that there exists a finite upper bound for the relative loss function. Strategy types that satisfy this criterion are called bounded. For strategy types whose relative loss functions are upper bounded, it is possible to minimize the upper bound according to the parameter of the strategy type. The analyses in this paper have shown that the strategy types \(\R^+\) and \(\R^\#\) are not well-suited as restart strategies. In the case of the strategy type \(\R^+\), where the same amount \(\nu\) is added after each restart, it has been demonstrated that the relative loss function is unbounded. This is also the case for \(\R^\#\). For this strategy type the parameters are determined according to a power law. This does, however, not exclude that there are problem instances where these strategies might excel. 

In the case of the multiplicative strategy type \(\R^*\)-RS, where the algorithmic parameter is multiplied by an restart parameter \(\rho\) for each restart, it was demonstrated that the relative loss function is bounded. In examining this strategy type, it was demonstrated that there exists an optimal choice of \(\rho\), which minimizes the asymptotic upper bound of the relative loss function. This value was found to be \(\ropt = 2\). It is independent of the start value \(\lambda_0\). Furthermore, it was shown that there is no value of \(\rho\) that minimizes the asymptotic lower bound of the relative loss function. 
 
The same results can be derived for the \(R^\times\)-RS, which is also a multiplicative strategy type. In contrast to the \(R^*\)-RS, where the values of \(\lambda_{k}\) are the sum of the previous rounded values, for the \(R^\times\)-RS the values for \(\lambda_k\) are rounded up only once at the end. Using \(\rho = \ropt\), the maximum relative loss w.r.t. \(\lopt\) is 3. This result is remarkable indicating that even in the worst case the performance of the strategies degrades by only a (constant) factor of three.

In this work, the worst case scenario has been investigated. Alternatively, some kind of amortized analysis seems to be possible, i.e., the average relative loss can also be considered. The restart parameter that minimizes the average relative loss must be necessarily larger than \(\ropt\). This is a topic that is currently under investigation. 

While it has been demonstrated in this paper that the multiplicative strategy types are preferable to the other strategy types under consideration it is currently unclear whether there exist other strategy types whose relative loss is bounded as it is the case for the multiplicative strategy types. This is a topic for future research.

\section*{Acknowledgements}
This work was supported by the Austrian Science Fund (FWF) under grant P33702-N.

\end{document}